\newcolumntype{Y}{>{\centering\arraybackslash}X}
\newtheorem{lemma}{Lemma}
\begin{document}

\title{A Fast and Effective Breakpoints Heuristic Algorithm for the Quadratic Knapsack Problem}

\author{D.~S.~Hochbaum$^1$, P.~Baumann$^2$, O.~Goldschmidt$^3$, Y.~Zhang$^1$\\
\\
$^1$IEOR Department, University of California, Berkeley, CA 94720, USA \\
$^2$Department of Business Administration, University of Bern, Engehaldenstr.\ 4, 3012 Bern, Switzerland\\
$^3$Riverside County Office of Education, Riverside, CA 92501, USA\\
\\
dhochbaum@berkeley.edu, philipp.baumann@unibe.ch, goldoliv@gmail.com, zhang@berkeley.edu \\
\\
This work has been published in the \href{https://www.sciencedirect.com/science/article/pii/S0377221724009615}{\color{blue}European Journal of Operational Research}
\\
\\
\url{https://doi.org/10.1016/j.ejor.2024.12.019}
\\
\\
The code of the algorithm is being developed on an on-going basis at
\\
\href{https://github.com/phil85/breakpoints-algorithm-for-qkp}{\color{blue}github.com/phil85/breakpoints-algorithm-for-qkp}.
}

\maketitle
\thispagestyle{empty}

\fancypagestyle{empty}{
	\fancyhf{} 
	\renewcommand{\headrulewidth}{0pt} 
	\renewcommand{\footrulewidth}{0pt} 
	\fancyfoot[C]{\copyright\ 2024 D.S.\ Hochbaum, P.\ Baumann, O.\ Goldschmidt and Y.\ Zhang.} 
}

\bibliographystyle{elsarticle-harv}

\begin{abstract}
The Quadratic Knapsack Problem (QKP) involves selecting a subset of elements that maximizes the sum of pairwise and singleton utilities without exceeding a given budget. The pairwise utilities are nonnegative, the singleton utilities may be positive, negative, or zero, and the node costs are nonnegative. We introduce a Breakpoints Algorithm for QKP, named QKBP, which is based on a technique proposed in \cite{Hoc09} for efficiently generating the concave envelope of the solutions to the relaxation of the problem for all values of the budget. Our approach utilizes the fact that breakpoints in the concave envelopes are optimal solutions for their respective budgets. For budgets between breakpoints, a fast greedy heuristic derives high-quality solutions from the optimal solutions of adjacent breakpoints. The QKBP algorithm is a heuristic which is highly scalable due to an efficient parametric cut procedure used to generate the concave envelope. This efficiency is further improved by a newly developed compact problem formulation. Our extensive computational study on both existing and new benchmark instances, with up to 10,000 elements, shows that while some leading algorithms perform well on a few instances, QKBP consistently delivers high-quality solutions regardless of instance size, density, or budget. Moreover, QKBP achieves these results in significantly faster running times than all leading algorithms. The source code of the QKBP algorithm, the benchmark instances, and the detailed results are publicly available on GitHub.
\end{abstract}

\section{Introduction}
The Quadratic Knapsack Problem (QKP) is to select from a given set of elements a subset that maximizes the sum of pairwise utilities and singleton utilities in the subset, so that the total cost of the subset does not exceed a given budget. The pairwise utilities are nonnegative, the singleton utilities may be positive, negative, or zero, and the node costs are nonnegative. The QKP problem is NP-hard since it generalizes the strongly NP-hard maximum clique problem. The Quadratic Knapsack Problem was introduced under this name by \cite{QKPgallo1980}. The problem arises in various application areas that include the maximum dispersion problem (\citealt{Kuby87, witzgall1988electronic, Kincaid92, Kuo93, Augca00,Aringhieri08,Hoc09,Aringhieri11}), the maximum diversity problem (\citealt{Ghosh96,Glover98,Silva04,Duarte07,Palubeckis07,Silva07,Wang12,Wang14,De2014,Zhou17,Marti21,parreno2021measuring,hocACDA2023breakpoints,spiers2023exact}), the collaborative team formation problem (\citealt{hocACDA2023breakpoints}) and the maximum benefit problem (\citealt{Hoc09}). Additional applications of QKP are related to the problem of maximum clique, such as the hidden clique and the densest subgraph of bounded size problems. QKP has also been applied in telecommunications (\citealt{witzgall1975}), selecting sites for satellite stations such that the global traffic between the stations is maximized, and in railways or freight handling terminals (\citealt{rhys1970selection}).

Although QKP has been studied extensively for more than four decades, there are no methods known to-date that can consistently solve the problem to optimality or very close to optimality for general large scale instances. We present here a new heuristic for the Quadratic Knapsack Problem that is based on the breakpoints algorithm, introduced in \cite{Hoc09} and recently used for the maximum diversity and maximum dispersion problems, \citealt{hocACDA2023breakpoints}. We demonstrate that this approach, referred to here as QKBP, for Quadratic Knapsack BreakPoints, provides optimal or near-optimal solutions on a wide range of benchmark instances within dramatically fast running times, providing several orders of magnitude speedups as compared to state-of-the-art approaches. Moreover, this performance is robust across different types of benchmark instances, varying sizes of benchmark instances, densities and budget levels. 

Our literature review (see Section~\ref{sec:literature}) indicates that existing algorithms for QKP have primarily been evaluated on standard QKP test-sets, and only a few algorithms have been compared to each other.  The research to-date contains no comprehensive computational comparison that systematically analyzes how the performance of the algorithms depends on parameters such as, graph density and budget values relative to the total weight of the nodes, across different types of test-sets.  This paper provides such a comprehensive study for the first time.

The QKP problem can be formalized as a graph problem.
A natural graph representation is as an undirected graph $G=(V,E)$ defined on a set of $n$ elements $V$, designated as nodes, and a set of $m$ pairs $E$, designated as edges. Each edge $[i,j]$ corresponds to a pair of nodes $i,j$ with a positive pairwise utility $u_{ij}$. There are singleton node utilities (unrestricted in sign) $u_{ii}$ and nonnegative node costs $q_i$ for every node $i\in V$, and a budget $B$. The problem is to find a subset $S \subset V$ so that the total cost of the nodes in $S$ does not exceed the budget $B$ and so that the sum of the weights of edges and singletons within $S$ is maximum.  

Another, equivalent graph representation of QKP, is as a {\em directed} graph, $G=(V,A)$  where for every pair $i,j$ with positive utility $u_{ij}$ we have an arc $(i,j)$ directed from $i$ to $j$ for $i<j$.
As will be shown here, in Section~\ref{sec:compact}, although equivalent, this directed version leads to a more efficient procedure.

We note that the choice of representing the problem on a directed graph versus an undirected graph is of practical importance. Our minimum cut network corresponding to the problem formulation on an undirected graph would have, for each pair $i,j$, two arcs, whereas for the directed graph formulation it would have only one arc. The directed graph representation reduces the running time of QKBP because the minimum cut procedure used, HPF (Hochbaum PseudoFlow, \citealt{Hoc08, Chandran09}), runs in practice in linear time relative to the number of arcs. This new, directed, formulation of QKP and the associated graph are described in Section \ref{sec:compact}. 

The standard formulation of the QKP is as a quadratic binary optimization problem. Let $x_i$ be a binary variable which is equal to $1$ if node $i$ is selected in $S$ and $0$ otherwise.  

{\renewcommand{\arraystretch}{1.5}
	\setlength{\arraycolsep}{5pt}
	\[\textup{(QKP)}\left\{\begin{array}{lll}
		\max & \displaystyle \sum_{(i,j)\in A} u_{ij}x_{i}x_{j} + \sum _{i\in V} u_{ii} x_i  \\
		\textup{s.t.} & \displaystyle \sum _{i\in V} q_i x_i  \leq B\\
		& x_i \in \{0, 1\} \qquad \forall \ i\in V \\
	\end{array}\right.\]}

\cite{QKPgallo1980} proposed to use the {\em Lagrangian relaxation} resulting from relaxing the budget constraint. Specifically, the Lagrangian relaxation, for a fixed value of the budget $B$ is

\[
L_B(\lambda) = \max _{x_i\in \{0,1\} ,\ i\in V} \Bigg\{ 
\sum_{(i,j)\in A} u_{ij} x_i x_j + \sum _{i\in V} u_{ii} x_i 
\]
\[
+ \lambda \Big(B- \sum _{ i\in V} q_i x_i \Big) \Bigg\}.
\]

Since for any $\lambda$  $L_B(\lambda) $ is an upper bound on the respective value of QKP, $L_B(\lambda ^* ) = \min _{\lambda \geq 0} L_B(\lambda)$ is the tightest upper bound generated from this relaxation. The procedure to identify $L_B(\lambda ^* )$ involves binary search to find the best value of the Lagrange multiplier,~$\lambda ^*$. This idea of using the Lagrangian relaxation to generate upper bounds has been used in most exact algorithms devised for the problem, most recently by \cite{spiers2023exact}

The breakpoints algorithm is related to the Lagrangian relaxation, yet it takes a different approach of generating the ``concave envelope" which is a piecewise linear function mapping the budget value to the corresponding optimal relaxation value. Rather than focusing on the value of the relaxation to be used as an upper bound, it utilizes the optimal solution sets at the breakpoints of the piecewise linear function adjacent to the value of the budget in order to generate close to optimal solutions for the respective QKP.	

The relaxed problem $L_B(\lambda)$ is polynomial time solvable, and moreover, the concave envelope describing the solutions for all values of $\lambda$ is generated in polynomial time. It was shown in \cite{Hoc09} that this entire collection of solutions, for all values of $\lambda$, is derived in the running time required to solve a minimum cut on an associated graph, using the parametric minimum cut procedure of \cite{Hoc08,WebHPF,WebHPFFULL,WebHPFSIM}. The statement that one can derive the solutions for all values of $\lambda$ appears surprising at first glance since the domain of $\lambda$ values is infinite. As shown in \cite{Hoc09}, there are at most $|V|$ different solution sets for all values of $\lambda$ and they are furthermore nested as the values of $\lambda$ go down, so all can be represented in $O(|V|)$ space and time. 

In our approach, the concave envelope is generated with an efficient parametric cut procedure, that has never been previously employed for QKP. Such a parametric cut procedure was theoretically described,  by \cite{gallo1989fast}, for the push-relabel algorithm, and later, by \cite{hochbaum1998pseudoflow,Hoc08}, for the HPF (Hochbaum PseudoFlow) algorithm.  Both methods have the same complexity, however there is no implementation of the parametric push-relabel algorithm. We use the parametric HPF implementation in our experiments. In general there are two variants of the parametric cut procedure. The {\em fully parametric} variant generates all the breakpoints (see \citealt{WebHPFFULL});
The {\em simple parametric} variant takes as input a sequence of values of $\lambda$, or a sequence of source adjacent capacities and sink adjacent capacities that are monotone non-increasing on one side, and monotone non-decreasing on the other, \cite{WebHPFSIM} and outputs the minimum cut (and maximum flow) solution for the values of $\lambda$ where the solution changes - the breakpoints. We implement here a version of simple parametric HPF that has a streamlined interface for reading the input and that outputs the concave envelope of QKP.

The contributions made in this paper include:
\begin{itemize}[nosep]
	\item Providing an algorithm for QKP which is the first to deliver solutions to very large scale benchmark instances within a few seconds and often is orders of magnitude faster than known leading algorithms.
	\item Demonstrating a principled approach for generating a compact formulation of QKP, using the theory of monotone integer programs.
	\item Providing a streamlined implementation of the simple parametric cut procedure for the Quadratic Knapsack Problem. 
	\item Generating new benchmark instances for QKP derived from team formation, large size synthetic data sets, facility dispersion and generalized QKP benchmark instances. In all these, we systematically vary the budget fraction and the graph density and show how variations in the budget values and densities affect the performance of leading algorithms. 
	\item Conducting an extensive experimental study on a broader collection of diverse data sets than have ever been previously studied and comparing our approach (QKBP) to seven known competitive algorithms.
	\item We make all data and algorithms associated with our experimental analysis publicly available on Github. This includes the \href{https://github.com/phil85/benchmark-instances-for-qkp}{\color{blue} benchmark instances} generated, the \href{https://github.com/phil85/breakpoints-algorithm-for-qkp}{\color{blue} QKBP algorithm implementation} including the interface with the parametric minimum cut procedure, and the \href{https://github.com/phil85/results-for-qkp-benchmark-instances}{\color{blue} detailed results} for all instances. 
\end{itemize}

The paper is organized as follows. In the next section, we provide a literature review, followed by preliminaries and notation in Section~\ref{sec:prelim}. Section~\ref{sec:Lagrange-envelope} describes the Lagrangian relaxation of QKP and how it is related to the concave envelope. The QKP formulation and the compact formulation as well as the related graph constructions are given in Section~\ref{sec:compact}. The parametric network and the parametric cut procedure for the $\lambda$ relaxation is given in Section \ref{sec:parametric} and the breakpoints algorithm QKBP is described in Section~\ref{sec:envelope}. In Section~\ref{sec:comparison}, we describe the experimental study, provide the details about the tested algorithms, the benchmark instances, and analyze the results. We conclude with several remarks in Section~\ref{sec:conclusions}.

\section{Literature review}\label{sec:literature}
There is a large body of literature on the Quadratic Knapsack Problem. Surveys by \cite{cacchiani2022knapsack}, \cite{pisinger2007quadratic} and \cite{kellerer2004knapsack} provide a comprehensive coverage of existing literature on QKP. 

We review here a selection of leading exact and heuristic techniques for the QKP with nonnegative utilities. The more general version of the problem with nonnegative utilities, the supermodular Knapsack Problem, is studied in \cite{gallo1989supermodular}. The version of the QKP that permits negative utilities is a different problem and also known as the knapsack problem with conflict pair constraints (see \citealt{pferschy2009knapsack, yamada2002heuristic, punnen2024knapsack}). 

\cite{caprara1999exact} proposed Quadknap (referred to here as the QK algorithm), an exact branch-and-bound algorithm using Lagrangian relaxation to compute the upper bound. The QK algorithm was tested, in \cite{caprara1999exact}, on a standardized benchmark set proposed by \cite{gallo1980quadratic} with up to 400 nodes and with graph densities of 25, 50, 75 and 100\%. Although the upper bounds obtained were typically within 1\% of the optimum, the algorithm performed best for high-density instances since the upper bounds are generally tighter for these cases. 
\cite{pisinger2007solution} proposed an algorithm that makes use of aggressive reduction techniques to reduce the size of instances. The algorithm builds on top of an improved version of the \cite{caprara1999exact} bound based on upper planes and a reformulation of QKP, and the \cite{billionnet2004mixed} bound based on Lagrangian decomposition. Although this algorithm was reported to perform very well, the code is no longer accessible and the results cannot be replicated today, as per private communication with the authors, \cite{Priv_Comm_Pisinger}, 


\cite{chen2017iterated} proposed an iterated “hyperplane exploration” approach (IHEA) that adopts the idea of searching over a set of hyperplanes defined by a cardinality constraint to delimit the search to promising areas of the solution space. On all small instances generated by \cite{billionnet2004mixed}, IHEA consistently achieved the known optimal solutions. When working with large instances of size 5,000 to 6,000 nodes, the algorithm was reported to outperform two other algorithms.

\cite{QKPfomeni2014dynamic} presented an algorithm that uses dynamic programming as a heuristic by modifying the classic dynamic programming  technique used for the linear knapsack problem. When tested against the benchmark set by \cite{gallo1980quadratic}, this heuristic, referred to here as the DP algorithm, is reported to achieve close to optimal results in most tested cases. \cite{fomeni2022cut} presented an improved Cut-And-Branch algorithm for QKP which consists of a cutting-plane phase followed by a branch and bound phase. The algorithm was shown capable of solving standard QKP instances with up to 800 elements within a five hour time limit. When tested against both the dispersion problem and the densest sub-graph instances of \cite{pisinger2007solution}, the algorithm performed fairly well with almost a third of these instances solved within the cutting plane phase, and a large majority of the remaining instances solved within a three hour time limit. For the much more difficult hidden clique instances, the algorithm is reported to find the optimal solution for instances with up to 200 elements within five hours. By combining dynamic programming with a local search procedure adapted and implemented in the space of lifted variables of the QKP, \cite{fomeni2023lifted} proposed a new deterministic heuristic algorithm for finding good QKP feasible solutions. This algorithm, called LDP, was reported to perform better than the dynamic programming algorithm suggested previously by \cite{QKPfomeni2014dynamic}, especially for dispersion problem and densest sub-graph problem instances.

The breakpoints algorithm concept was introduced in \cite{Hoc09} which provided the framework idea of the breakpoints concave envelope and the use of greedy for budgets that fall between breakpoints. However, \cite{Hoc09} used the simple formulation of QKP on a bipartite graph which leads to the relaxation formulation ($\lambda$-QKP1).  Our formulation here, ($\lambda$-QKP2) is much more compact: It leads to a graph on $O(n)$ nodes and $O(m)$ arcs versus $O(m+n)$  nodes and $ O(m)$ arcs. In addition, the results in \cite{Hoc09} are theoretical and address theoretical improvements in the complexity of the parametric flow, or cut, procedure. By comparison, we focus here on practical enhancements of the parametric HPF code, reducing drastically the storage and computation time required to prepare the input to the code, and for a much more compact graph than was addressed in \cite{Hoc09}.

\section{Preliminaries and notation}\label{sec:prelim}

Let $G=(V,A)$ be a simple directed graph without self-loops. Each arc $(i,j)$ corresponds to a pair of nodes $i,j$ with a pairwise utility $u_{ij}$ directed from $i$ to $j$ for $i<j$. Each node $i\in V$ has a non-negative cost $q_i$ and a utility $u_{ii}$ that could be positive or negative.  For two subsets of nodes, $D_1,D_2\subseteq V$,  we let $C(D_1,D_2)= \sum _{i \in D_1,j \in D_2, \ (i,j) \in A} u_{ij}$.  
With this notation $C(S,S) = \sum _{i,j \in S, (i,j) \in A} u_{ij}$ and $C(S,\bar{S}) = \sum _{i \in S,j \in \bar{S}(i,j) \in A} u_{ij}$.
Let $U(S)=\sum _{i\in S} u_{ii}$ and $q(S)=\sum _{i\in S} q_i $. 

QKP is then formulated as 
\begin{equation}
	\max _{\emptyset \subset S \subset V, q(S)\leq B} C(S,S) +U(S).
	\label{eqn:QKP}
\end{equation}

The relaxation $\lambda$-QKP problem is then to identify an optimal subset of nodes that maximizes the expression, omitting the constant term $\lambda B$:
\begin{equation}
	\max _{\emptyset \subset S\subset V} C(S,S) +U(S)- \lambda q(S).
	\label{eqn:lambda-QKP}
\end{equation}

Let $d_i^+$ denote the weighted out-degree of node $i$ in $G$:  $d_i^+= \sum _{j| (i,j) \in A} u_{ij}$ and $d^+(S)=\sum _{i\in S} d_i^+$. The following equation that is easy to verify will be used to generate a compact formulation of the $\lambda$-QKP problem, 
\begin{equation}
	d^+(S)=C(S,S)+ C(S,\bar{S}).
	\label{eqn:C(S,S)}
\end{equation}
In the sequel, we will refer to the number of elements $|V|$ as $n$ and the number of pairwise utilities, $|A|$, as $m$.

\section{The Lagrangian relaxation and the concave envelope of QKP}\label{sec:Lagrange-envelope}
Consider here the Lagrangian relaxation of the problem's budget constraint for a Lagrange multiplier $\lambda$:
$$\text{($\lambda$-QKP) }\max _{x_i\in \{0,1\} ,\ i\in V}  \sum_{(i,j) \in A} u_{ij}x_{i}x_j +\sum _{i\in V} (u_{ii} -\lambda q_i)x_i.$$
This relaxation omits from the objective function the term $B\lambda$ which is constant for every $\lambda$. Let $f(B)$ be the value of the optimal solution to QKP with budget $B$:\\
$f(B)=\max _{S\subset V } \{ C(S,S) +U(S)|q(S)\leq B \}$, and let $S_B=\arg\max _{S\subset V } \{ C(S,S) +U(S)|q(S)\leq B \}$ be the corresponding solution set.  We will refer to the optimal objective value as the ``benefit" of the budget $B$.  Figure \ref{fig:0concave} illustrates the values of $f(B)$ for all budgets.  Next consider a collection of all lines that lie above all these points.  The minimum piecewise linear line segments among this collection that  lie above all the optimal points, is known to be concave.  Any point where the line segment changes, and the slope becomes lower, is called a {\em breakpoint}, see Figure \ref{fig:concave}. We call the slope of the first line segment $\lambda _1$, the second $\lambda _2$, which is lower in value, etc., for a total of $p$ breakpoints.

\begin{figure*}
	\begin{minipage}[c]{0.5\linewidth}
		\includegraphics[width=\linewidth]{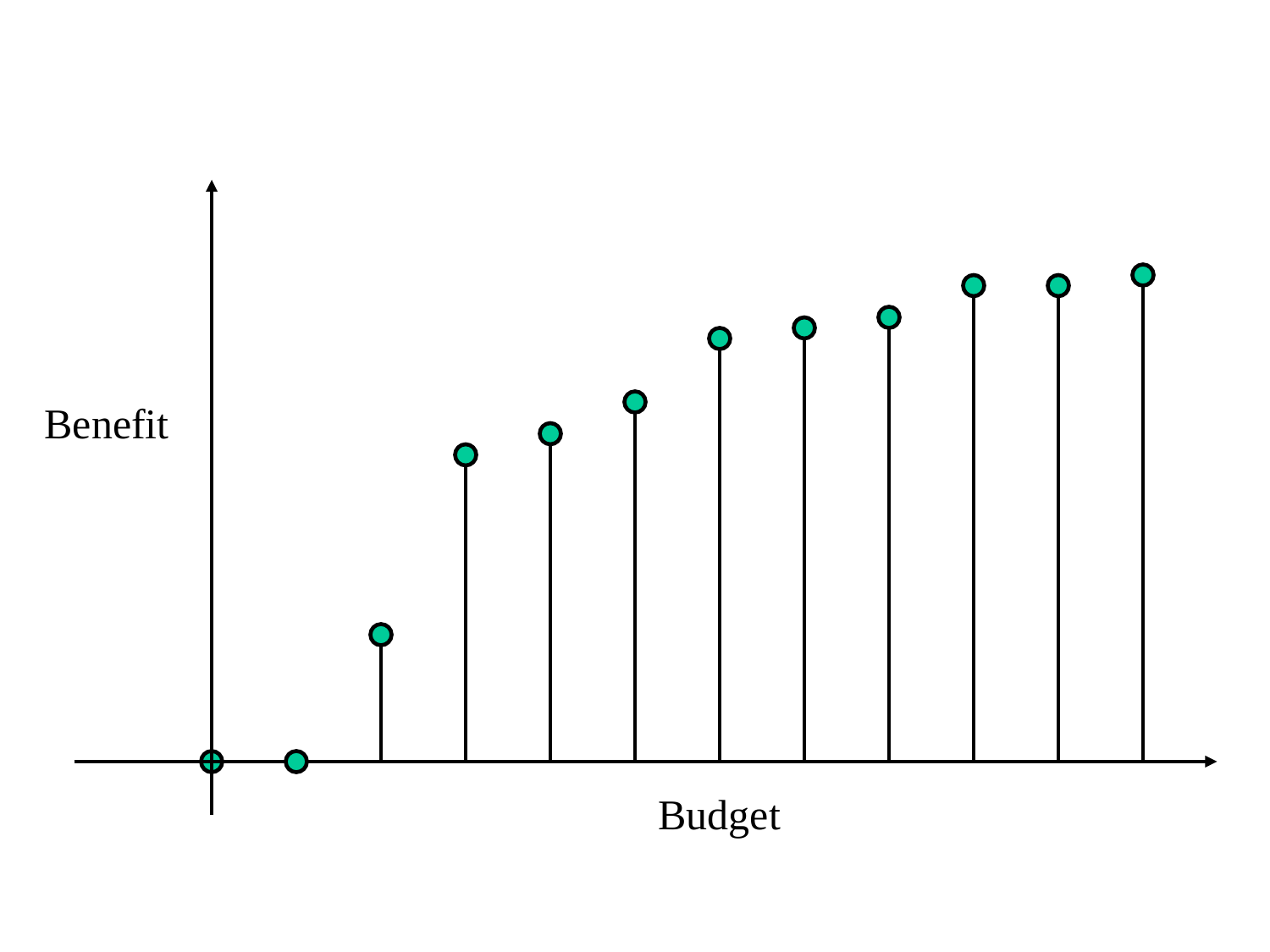}
		\caption{Optimal solutions for each budget}
		\label{fig:0concave}
	\end{minipage}
	\hfill
	\begin{minipage}[c]{0.5\linewidth}
		\includegraphics[width=\linewidth]{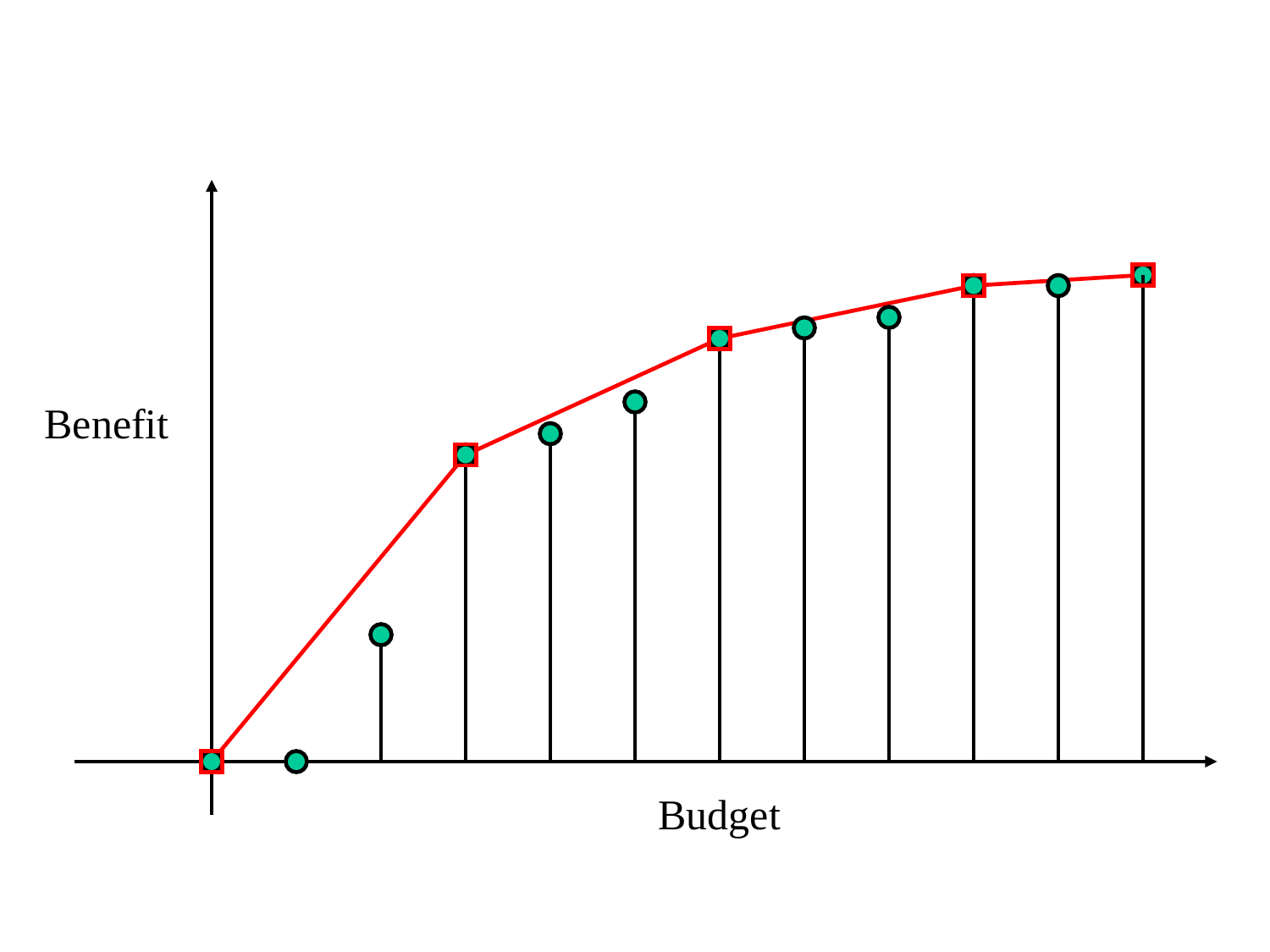}
		\caption{The concave envelope and the breakpoints, indicated as squares.}
		\label{fig:concave}
	\end{minipage}%
\end{figure*}

\begin{figure}
	\centering
	\includegraphics[scale=0.3]{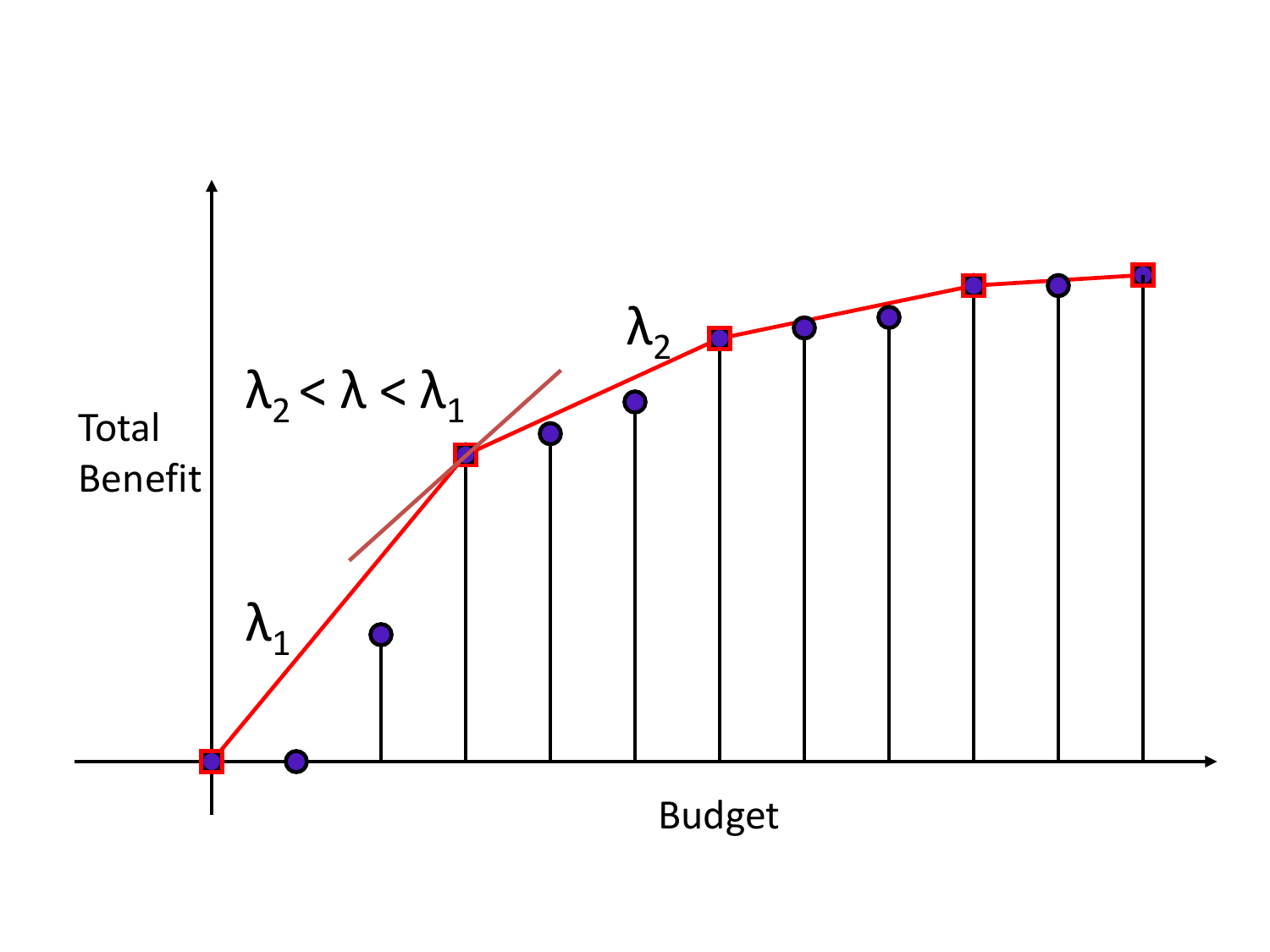}
	\vspace{-0.3in}
	\caption{The values of $\lambda$ associated with the concave envelope.}
	\label{fig:envelope}
\end{figure}

The properties of the concave envelope were studied, in the context of the dynamic evolution problem in \cite{Hoc09}. That paper addressed how the maximum benefit solution evolves as the budget increases.  These properties include:

\begin{itemize}[nosep]
	\item At the breakpoints of the envelope the solutions are optimal. 
	\item The first breakpoint -- the smallest positive budget breakpoint -- corresponds to the densest subgraph, which attains the largest ratio of the benefit to the budget.
	\item  The breakpoints correspond to solutions that are {\em nested} -- the solution set at one breakpoint is a subset of the solution sets for larger budgets' breakpoints.
	\item The number of breakpoints is at most $n$, the number of elements, or nodes, in the graph $G$ or since the origin is considered a breakpoint it could go up to $n+1$. 
	\item The envelope describes an upper bound on the optimal value at any level of the budget.
	\item If there are optimal solutions that lie on the line segments of the envelope, a method that uses the output of HPF described in \cite{Hoc09}, generates such solutions, in constant time per solution. 
\end{itemize}

\section{The formulation, compact formulation and solving $\lambda$-QKP as a minimum cut}\label{sec:compact}
The $\lambda$-QKP belongs to the class of monotone integer programming problems (IPM). IPM are (linear) integer programming problems on at most 3 variables per constraint, where two of the variables, the $x$-variables, appear with opposite sign coefficients, and a third variable, a $z$-variable, if included, can appear in at most one constraint. The coefficient of the third variable, in the objective function, must be non-negative for minimization problems, or non-positive for maximization problems.  
Integer problems in this class are solvable in polynomial time as a minimum cut on an associated graph, \cite{Hoc02}.  This is done by first reducing the IPM to an equivalent IPM on binary variables called the {\em s-excess} problem, which has a constraint matrix that is totally unimodular.  Any s-excess problem has an associated $s,t$-graph where the respective minimum $s,t$-cut solution provides the optimal solution. 

Consider the (linear) integer programming formulation of QKP to find an optimal solution set $S$. Define the binary variables: $x_i=1$ if item $i\in S$ and $0$ otherwise. Let $y_{ij}=1$ if both $i$ and $j$ are in $S$, and $0$ otherwise. With this notation the formulation of $\lambda$-QKP is, 

{\renewcommand{\arraystretch}{1.5}
	\setlength{\arraycolsep}{5pt}
	\[\textup{($\lambda$-QKP1)}\left\{\begin{array}{lll}
		\max & \displaystyle \sum _{(i,j) \in A} u_{ij}y_{ij} +\sum_{i\in V} (u_{ii} -\lambda q_i) x_i  \\
		\textup{s.t.} &  y_{ij} \leq x_i  \qquad \forall \ (i,j) \in A \\
		& y_{ij} \leq x_j  \qquad \forall \ (i,j) \in A\\
		& x_i \in \{0, 1\} \qquad \forall \ i\in V \\
		& y_{ij} \in \{0, 1\} \qquad \forall (i,j) \in A
	\end{array}\right.\]}

This formulation is a monotone integer program on at most two variables per inequality.   The associated constructed graph 
is a graph where each variable is represented by a node resulting in two types of nodes.  On one side of the bipartition there are $|A|$ nodes representing the $y_{ij}$ variables, and on the other side $|V|$ nodes representing the $x_j$ variables. Every $y_{ij}$ node has an incoming arc from the source $s$ of capacity $u_{ij}$ and 
two arcs of infinite capacity going to the nodes corresponding to $x_i$ and $x_j$.  Every node $x_j$  has two arcs, one from $s$ of capacity $\max \{ u_{ii} -\lambda q_i ,0\}$ and one going to the sink $t$ of capacity $-\min \{ u_{ii} -\lambda q_i ,0\}= \max \{ \lambda q_i -u_{ii},0\}$.  Note that this graph is a bipartite network only when $u_{ii}\leq 0$, in  which case it represents the selection problem (see \citealt{balinski1970selection,rhys1970selection}).
The formulation is that of a {\em monotone integer program in two variables per inequality} and as such it is solved as a minimum cut on the associated graph shown in Figure \ref{fig:graph2} where the variable associated with every node  that falls in the source set of the minimum cut taking the value $1$, and zero otherwise.

\begin{figure}
	\centering
	\resizebox{8cm}{!}{%
		\begin{tikzpicture}[
    node distance=3cm,
    mynode/.style={circle, draw, minimum size=0.8cm},
    myarrow/.style={-Latex},
]

\node[mynode] (s) {$s$};
\node[mynode, above right=1.5cm and 2cm of s] (yij) {$y_{ij}$};
\node[mynode, below right=1.5cm and 2cm of s] (ypq) {$y_{pq}$};
\node[mynode, right=8cm of s] (t) {$t$};
\node[mynode, above right=0.5cm and 2cm of yij] (xi) {$x_{i}$};
\node[mynode, below right=0.5cm and 2cm of yij] (xj) {$x_{j}$};
\node[mynode, above right=0.5cm and 2cm of ypq] (xp) {$x_{p}$};
\node[mynode, below right=0.5cm and 2cm of ypq] (xq) {$x_{q}$};
\node[above=0.8cm of yij, font=\tiny] (label) {``edge" nodes};
\node[above=0.1cm of xi, font=\tiny] (label) {``node" nodes};

\draw[myarrow] (s) -- (yij) node[midway,  above,font=\scriptsize] {$u_{ij}$};
\draw[myarrow] (s) -- (ypq) node[midway,  below,font=\scriptsize] {$u_{pq}$};
\draw[myarrow] (yij) -- (xi) node[midway, left,font=\scriptsize] {$\infty$};
\draw[myarrow] (yij) -- (xj) node[midway, left,font=\scriptsize] {$\infty$};
\draw[myarrow] (ypq) -- (xp) node[midway, left,font=\scriptsize] {$\infty$};
\draw[myarrow] (ypq) -- (xq) node[midway, left,font=\scriptsize] {$\infty$};
\draw[myarrow] (xj) -- (t) node[midway, left,font=\scriptsize] {};
\draw[myarrow] (xp) -- (t) node[midway, left,font=\scriptsize] {};
\draw[myarrow] (xq) -- (t) node[midway, left,font=\scriptsize] {};
\draw[myarrow] (s) to node[midway, left,font=\scriptsize] {} (xj);
\draw[myarrow] (s) to node[midway, left,font=\scriptsize] {} (xp);


\draw[myarrow] (xi) -- (t) node[midway,  right,font=\scriptsize] {$max\{-w_i,0\}$};
\draw[myarrow] (s) to[bend left=35] node[pos=0.3,  above left,font=\scriptsize] {$max\{w_i,0\}$} (xi);
\draw[myarrow] (s) to[bend right=35] node[pos=0.8,  above left,font=\scriptsize] {} (xq);

\foreach \n in {1,2,3} {
    \node at ($(xi)!0.3+\n*0.1!(xj)$) {$\cdot$};
}

\foreach \n in {1,2,3} {
    \node at ($(xj)!0.3+\n*0.1!(xp)$) {$\cdot$};
}

\foreach \n in {1,2,3} {
    \node at ($(xp)!0.3+\n*0.1!(xq)$) {$\cdot$};
}

\foreach \n in {1,2,3,4,5,6,7,8} {
    \node at ($(yij)!0.05+\n*0.1!(ypq)$) {$\cdot$};
}
\node[mynode, below = 4cm of s] (s2) {$s$};
\node[mynode, below=4cm of t] (t2) {$t$};
\node[mynode](xi2) at ($(s2)!0.5!(t2)$) {$x_i$};

\draw[myarrow] (s2) -- (xi2) node[midway,  above,font=\scriptsize] {$max\{u_{ii}-\lambda q_i,0\}$};
\draw[myarrow] (xi2) -- (t2) node[midway,  above,font=\scriptsize] {$max\{\lambda q_i-u_{ii},0\}$};
\end{tikzpicture}
	}
	\caption{The two variables per inequality formulation graph for $\lambda$-QKP1}
	\label{fig:graph2}
\end{figure}
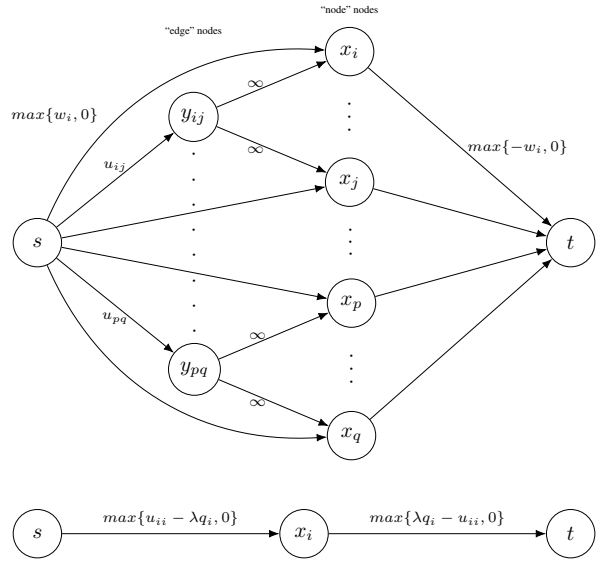

To introduce a compact formulation for solving the $\lambda$-QKP problem we use the formulation of the problem as,
$$\max _{\emptyset \subset S\subset V}C(S,S) +\sum _{i \in S} u_{ii}- \lambda\sum _{i \in S} q_i. $$ 
Recall that $d^+_i$ is the weighted out-degree of node $i$ in $G$:  $d_i^+= \sum _{j| (i,j) \in A} u_{ij}$.

\begin{lemma}
	The $\lambda$-QKP problem is equivalent to 
	\begin{equation}
		\max _{\emptyset \subset S\subset V} \sum _{i \in S}(d_i^++u_{ii}-\lambda q_i) - C(S,\bar{S}) . 
		\label{eqn:QKP-s-excess}
	\end{equation}
	
\end{lemma}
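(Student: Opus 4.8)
The plan is to show that the two objective functions agree term-by-term for every feasible set $S$, so that the two maximization problems have identical feasible regions $\{S : \emptyset \subset S \subset V\}$ and identical objective values, and are therefore equivalent. The starting point is the defining objective of $\lambda$-QKP, namely $C(S,S) + \sum_{i \in S} u_{ii} - \lambda \sum_{i \in S} q_i$, and the target is the expression in~\eqref{eqn:QKP-s-excess}. Since $\sum_{i \in S} u_{ii}$ and $\lambda\sum_{i \in S} q_i$ appear unchanged in both expressions, and the feasible region is the same, the entire content reduces to rewriting the quadratic term $C(S,S)$ in terms of out-degrees and the cut value $C(S,\bar{S})$.

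The key step is to invoke equation~\eqref{eqn:C(S,S)}, which states $d^+(S) = C(S,S) + C(S,\bar{S})$. Rearranging gives $C(S,S) = d^+(S) - C(S,\bar{S})$, and by the definition $d^+(S) = \sum_{i \in S} d_i^+$ this becomes $C(S,S) = \sum_{i \in S} d_i^+ - C(S,\bar{S})$. Substituting this into the $\lambda$-QKP objective yields
$$\sum_{i \in S} d_i^+ - C(S,\bar{S}) + \sum_{i \in S} u_{ii} - \lambda\sum_{i \in S} q_i = \sum_{i \in S}\bigl(d_i^+ + u_{ii} - \lambda q_i\bigr) - C(S,\bar{S}),$$
which is precisely the objective in~\eqref{eqn:QKP-s-excess}.

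I do not anticipate a genuine obstacle here: the result is an exact identity, not an inequality or an approximation, so no optimality argument or case analysis is needed, as the two programs share the same objective value at every $S$. The only point requiring a moment of care is the verification of equation~\eqref{eqn:C(S,S)} itself, since this is where the out-degree decomposition enters: each arc $(i,j)$ with $i \in S$ contributes $u_{ij}$ to $d^+(S)$ exactly once, and is counted in either $C(S,S)$ (when $j \in S$) or in $C(S,\bar{S})$ (when $j \in \bar{S}$), but never in both, giving a clean partition of the out-arcs of $S$. The excerpt already asserts this identity as easy to verify, so I would simply cite it and carry out the one-line substitution above, thereby completing the equivalence.
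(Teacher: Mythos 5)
Your proposal is correct and follows essentially the same route as the paper: both invoke the identity $d^+(S)=C(S,S)+C(S,\bar{S})$ from equation~(\ref{eqn:C(S,S)}) to rewrite $C(S,S)$ and then substitute into the $\lambda$-QKP objective, regrouping terms to obtain~(\ref{eqn:QKP-s-excess}). Your brief justification of the out-degree partition is a small addition the paper leaves as ``easy to verify,'' but the argument is otherwise identical.
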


\begin{proof}
	From equation (\ref{eqn:C(S,S)}), for any subset of nodes $S \subset V$,
	$$d^+(S)=C(S,S)+ C(S,\bar{S}).$$
	Therefore,
	$$C(S,S) +U(S) - \sum _{i \in S} \lambda q_i 
	=d^+(S) -C(S,\bar{S}) + \sum _{i \in S}u_{ii} - \sum _{i \in S}\lambda q_i.$$  
	Rearranging the terms, this expression is equivalent to (\ref{eqn:QKP-s-excess}).
\end{proof} 

The problem (\ref{eqn:QKP-s-excess}) is an instance of the {\em s-excess} problem, 
which is a monotone integer program solved as a minimum cut on a graph of the size of $G$, \cite{Hoc08}:
The s-excess problem is defined on a directed graph $G=(V,A)$ with non-negative arc capacities and with node weights $w_i$ that can be positive or negative:
\vspace{-0.1cm}
\begin{equation*}
	(\mbox{s-excess} ) \hspace{0.6cm}  \max _{S\subseteq V} \sum _{i \in S}w_i -C(S,\bar{S}) .
\end{equation*}
For problem (\ref{eqn:QKP-s-excess}), $$w_i= d^+_i+u_{ii}-\lambda q_i.$$  
The formulation of the problem utilizes, in addition to variables $x_i$, the binary variables $z_{ij}$ that are equal to $1$ if $x_i=1$ and $x_j=0$ for each arc in the graph $(i,j)\in A$.

{\renewcommand{\arraystretch}{1.5}
	\setlength{\arraycolsep}{5pt}
	\[\textup{($\lambda$-QKP2)}\left\{\begin{array}{lll}
		\max & \displaystyle \sum_{i\in V} (d_i^++u_{ii}-\lambda q_i)x_i - \sum _{(i,j)\in A} u_{ij}z_{ij}\\
		\textup{s.t.} 		 & x_i-x_j \leq z_{ij}  \quad \forall\ (i,j)\in A\\
		& x_i \in \{0, 1\} \quad \forall\ i\in V \\
		& z_{ij} \in \{0, 1\} \quad \forall\ 	(i,j)\in A
	\end{array}\right.\]}

The associated $s,t$-graph $G_{st}=(V\cup \{s,t\},A\cup A_s \cup A_t)$ for this s-excess problem is constructed as follows:  Add to the graph $G$ a source node $s$ and a sink node $t$; each arc in the graph $(i,j)\in A$ has the capacity $u_{ij}$;
Add a set of arcs $A_s$ that go from the source node $s$ to nodes $i$ with $w_i>0$ and have capacity $u_{si}=w_i$. The sink adjacent arcs $A_t$ go from nodes $j$ with $w_j<0$ to the sink $t$, with capacity $u_{jt}=|w_j|=-w_j$.  This graph is shown in Figure \ref{fig:graph3} below. 

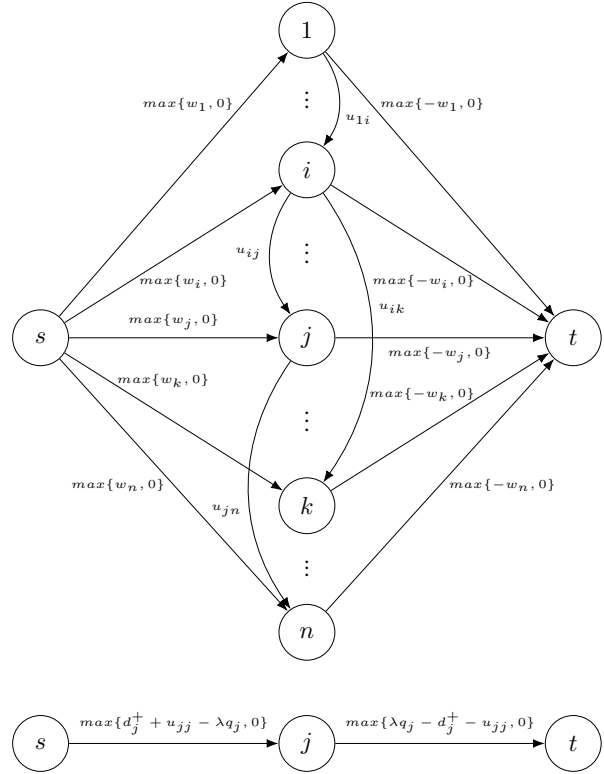
\begin{figure}
	\centering
	\resizebox{8cm}{!}{%
\begin{tikzpicture}[
    node distance=3cm,
    mynode/.style={circle, draw, minimum size=0.8cm},
    myarrow/.style={-Latex}
]

\node[mynode] (k) {$k$};
\node[mynode, above=4cm of k] (i) {$i$};
\node[mynode, above=6cm of k] (1) {$1$};
\node[mynode, below=1cm of k] (n) {$n$};

\node[mynode] (j) at ($(i)!0.5!(k)$) {$j$};
\node[mynode, left=of j] (s) {$s$};s
\node[mynode, right=of j] (t) {$t$};

\draw[myarrow] (s) -- (1) node[pos = 0.8,  left,font=\tiny] {$max\{w_1,0\}$};
\draw[myarrow] (1) -- (t) node[pos = 0.2,  right,font=\tiny] {$max\{-w_1,0\}$};
\draw[myarrow] (s) -- (i) node[pos=0.3,  right,font=\tiny] {$max\{w_i,0\}$};
\draw[myarrow] (i) -- (t) node[pos = 0.7,  left,font=\tiny] {$max\{-w_i,0\}$};
\draw[myarrow] (s) -- (j) node[midway, above,font=\tiny] {$max\{w_j,0\}$};
\draw[myarrow] (j) -- (t) node[midway, below,font=\tiny] {$max\{-w_j,0\}$} ;

\draw[myarrow] (1) to[bend left=35] node[pos=0.7, right, font=\tiny] {$u_{1i}$} (i);

\draw[myarrow] (j) to[bend right=35] node[pos=0.6, left, font=\tiny] {$u_{jn}$} (n);

\draw[myarrow] (i) to[bend right=35] node[pos=0.5, left, font=\tiny] {$u_{ij}$} (j);

\draw[myarrow] (i) to[bend left=35] node[pos=0.4, right, font=\tiny] {$u_{ik}$} (k);

\draw[myarrow] (s) -- (k) node[pos=0.2,  right,font=\tiny] {$max\{w_k,0\}$};
\draw[myarrow] (k) -- (t) node[pos = 0.7,  left,font=\tiny] {$max\{-w_k,0\}$};

\draw[myarrow] (s) -- (n) node[midway,  left,font=\tiny] {$max\{w_n,0\}$};
\draw[myarrow] (n) -- (t) node[midway,  right,font=\tiny] {$max\{-w_n,0\}$};

\foreach \n in {1,2,3} {
    \node at ($(j)!0.4+\n*0.05!(i)$) {$\cdot$};
}

\foreach \n in {1,2,3} {
    \node at ($(k)!0.4+\n*0.05!(j)$) {$\cdot$};
}

\foreach \n in {1,2,3} {
    \node at ($(n)!0.4+\n*0.05!(k)$) {$\cdot$};
}
\foreach \n in {1,2,3} {
    \node at ($(i)!0.4+\n*0.05!(1)$) {$\cdot$};
}
\node[mynode, below = 5cm of s] (s2) {$s$};
\node[mynode, right=of s2] (j2) {$j$};
\node[mynode, right=of j2] (t2) {$t$};

\draw[myarrow] (s2) -- (j2) node[midway,  above,font=\tiny] {$max\{d_j^++u_{jj}-\lambda q_j,0\}$};
\draw[myarrow] (j2) -- (t2) node[midway,  above,font=\tiny] {$max\{\lambda q_j-d_j^+-u_{jj},0\}$};

\end{tikzpicture}
	}
	\caption{The ``s-excess" flow graph for $\lambda$-QKP2.}
	\label{fig:graph3}
\end{figure}

The following lemma is a special case of the general result for the s-excess problem: 
\begin{lemma}[\citealt{Hoc02}]
	$S^*$ is an optimal solution to $\lambda$-QKP2 defined on graph $G$ if and only if $S^*$ is the source set of a minimum cut in $G_{st}$.
\end{lemma}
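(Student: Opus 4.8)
The plan is to prove the two-way correspondence by exhibiting the capacity of any $s$--$t$ cut in $G_{st}$ as a fixed constant minus the s-excess objective evaluated at the source set, so that minimum cuts coincide exactly with maximizers of (\ref{eqn:QKP-s-excess}). First I would eliminate the auxiliary $z$-variables. Since each $u_{ij}\ge 0$ and the term $\sum_{(i,j)\in A}u_{ij}z_{ij}$ is subtracted in a maximization, at optimality every $z_{ij}$ takes the smallest value permitted by $x_i-x_j\le z_{ij}$ and $z_{ij}\in\{0,1\}$, namely $z_{ij}=\max\{x_i-x_j,0\}$. Writing $S=\{i\in V:x_i=1\}$ and $w_i=d_i^++u_{ii}-\lambda q_i$, this makes $\sum_{(i,j)\in A}u_{ij}z_{ij}$ equal to $C(S,\bar S)$, so the objective of $\lambda$-QKP2 reduces to $\sum_{i\in S}w_i-C(S,\bar S)$, i.e.\ exactly the s-excess objective (\ref{eqn:QKP-s-excess}).

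Next I would compute the capacity of an $s$--$t$ cut. Each cut corresponds to a subset $S\subseteq V$ with source set $S\cup\{s\}$ and sink set $\bar S\cup\{t\}$; since $G_{st}$ contains no infinite-capacity arcs, every such $S$ yields a finite cut and the correspondence is a bijection. The cut severs exactly three kinds of arcs: the source-adjacent arcs into nodes of $\bar S$ with $w_i>0$, contributing $\sum_{i\in\bar S,\,w_i>0}w_i$; the sink-adjacent arcs out of nodes of $S$ with $w_j<0$, contributing $\sum_{j\in S,\,w_j<0}(-w_j)$; and the internal arcs $(i,j)\in A$ with $i\in S,\ j\in\bar S$, contributing $C(S,\bar S)$.

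The key step is the algebraic identity. Let $W^+=\sum_{i:\,w_i>0}w_i$, a constant independent of $S$. Splitting $\sum_{i\in S}w_i$ into positive and negative parts gives $\sum_{j\in S,\,w_j<0}(-w_j)=\sum_{i\in S,\,w_i>0}w_i-\sum_{i\in S}w_i$, while the source-arc term rewrites as $\sum_{i\in\bar S,\,w_i>0}w_i=W^+-\sum_{i\in S,\,w_i>0}w_i$. Adding the three contributions, the $\sum_{i\in S,\,w_i>0}w_i$ terms cancel and the cut capacity collapses to
\[
\mathrm{Cut}(S)=W^+-\Big(\sum_{i\in S}w_i-C(S,\bar S)\Big).
\]
Because $W^+$ does not depend on $S$, minimizing the cut capacity is equivalent to maximizing (\ref{eqn:QKP-s-excess}); hence $S^*$ is the source set of a minimum cut in $G_{st}$ if and only if it maximizes (\ref{eqn:QKP-s-excess}), which by the $z$-elimination is if and only if it is optimal for $\lambda$-QKP2. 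The main obstacle is purely bookkeeping: keeping the positive/negative weight partition consistent across the source-arc, sink-arc, and internal-arc contributions so that the positive-weight terms cancel cleanly and the additive constant $W^+$ is isolated. Once that identity is established, the equivalence is immediate.
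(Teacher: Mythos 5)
Your proposal is correct and follows essentially the same route as the paper: write the capacity of an arbitrary $s$--$t$ cut with source set $S$ as the three contributions from source arcs, sink arcs, and internal arcs, then use the positive/negative split of the $w_i$ to show the capacity equals the constant $W^+$ minus the s-excess objective $\sum_{i\in S}w_i - C(S,\bar S)$, so minimum cuts and maximizers coincide. The only addition is your explicit elimination of the $z_{ij}$ variables to identify the $\lambda$-QKP2 objective with the s-excess form, a step the paper leaves implicit (it is covered by its preceding lemma); this makes the argument slightly more self-contained but does not change the proof.
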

\begin{proof}
	For a given $\lambda$  denote $w_i=d^+_i+u_{ii}-\lambda  q_i$. Let  $V^+ = \{i \in V | w_i > 0\}$, and $V^- = \{j \in V |w_j < 0\}$. Let ($s \cup S, t \cup
	T$) be any $s,t$ cut in $G_{st}$. Then the capacity of this cut is given by
	\begin{eqnarray}
		&&C\left(s \cup S, t \cup T\right) \nonumber\\
		&=& \sum_{\left(s,i\right) \in A_{s},  i \in T   }  u_{si}   +  \sum_{\left(j,t\right) \in A_{t},  j \in S   }   u_{jt}  + \sum_{i \in S,  j \in T} u_{ij} \nonumber\\
		&=& \sum_{ i \in  T \cap V^+  } w_i + \sum_{ j \, \in \, S \cap V^-  } -w_j + \sum_{i \in S, \,\, j \in T} u_{ij} \nonumber\\
		&=& \sum_{i \in V^+}  w_i - \sum_{i \in S \cap V^+}  w_i + \sum_{ j \in  S \cap V^-  } -w_j + \sum_{i \in S,  j \in T} u_{ij} \nonumber \\
		&=& W^+ - \sum_{j \in S} w_j + \sum_{i \in S, \,\, j \in T} u_{ij} \nonumber
	\end{eqnarray}
	\noindent
	Where $W^+$ is the sum of all positive weights in $G$, which is a constant. Therefore, minimizing $C\left(s
	\cup S, t \cup T\right)$ is equivalent to maximizing $\sum_{j \in S} w_j - \sum_{i \in S, \,\, j \in T} u_{ij} $, and
	we conclude that the source set of a minimum $s,t$ cut on $G_{st}$ is also a maximum $\lambda$-QKP2 set.
\end{proof}

\section{Solving $\lambda$-QKP for all values of $\lambda$: The parametric cut}\label{sec:parametric}
Solving $\lambda$-QKP for all values of $\lambda$ requires to solve the minimum cut problem in Figure~\ref{fig:graph3} for all values of $\lambda$.  This flow network is a {\em parametric} flow network in that the arcs adjacent to the source are monotone non-increasing in the value of $\lambda$ and the arcs adjacent to the sink are monotone non-decreasing in the value of $\lambda$, or vice versa.  For a flow network with this property, the maximum flows and minimum cuts for all values of $\lambda$  can be solved with a {\em parametric cut} (or parametric flow) procedure in the same complexity as a single minimum cut (or maximum flow). This is true for parametric functions that are linear, as is the case here, whereas for general monotone parametric functions there is an unavoidable additive factor of $n \log U$ where $U$ is the range for the values of $\lambda$.  There are only two such parametric cut procedures known.  One is based on the push-relabel method of \cite{gallo1989fast}, and the other is based on the HPF method (Hochbaum's PseudoFlow) presented in \cite{hochbaum1998pseudoflow,Hoc08,Chandran09,WebHPFSIM}.

Let the source set of the minimum cut solving $\lambda$-QKP be denoted by $S_{\lambda}$.  Then for $\lambda_1 > \lambda_2$ it is known that $S_{\lambda_1}\subseteq S_{\lambda_2}$ - the {\em nestedness property}.  There are at most $n$, the number of nodes in the graph, values of $\lambda$ where $S_{\lambda}$ changes.  Each value of $\lambda$ in which the solution changes, say $\lambda _j$, is called a breakpoint and the solution $S_{\lambda _j}$ is optimal for $\lambda _j$-QKP for budget $B_j=\sum _{i\in S_{\lambda _j}} q_i$.    Plotting the values of the total utility of $S_{\lambda_j}$ as a function of the cost of $S_{\lambda_j}$, $\sum _{i\in S_{\lambda_j }} q_i$ generates a concave piecewise linear monotone increasing function referred to as the {\em concave envelope} of the solutions, see Figure~\ref{fig:envelope}. 

\section{The breakpoints algorithm and the concave envelope}\label{sec:envelope}
We use the concave envelope to generate a feasible solution for a given budget $B$ as follows: If $B$ corresponds to the budget $B_\ell$ at the $\ell$th breakpoint in the concave envelope, then the source set associated with that breakpoint, $S_{\ell }$, is the optimal solution. 
If that is not the case and the budget $B$ does not correspond to a breakpoint budget, the algorithm identifies the two breakpoints adjacent to $B$. That is, let $B_{\ell} < B_{\ell +1}$, be the budgets of two consecutive breakpoints such that $B_{\ell} < B< B_{\ell +1}$. We then generate two feasible solutions, one by greedily adding nodes to $S_{\ell}$, and the other one by greedily removing nodes from $S_{\ell + 1}$. We refer to the greedy procedure that adds nodes to $S_{\ell}$ as the {\em greedy-left} algorithm and the one that removes nodes from $S_{\ell+1}$ as the {\em greedy right} algorithm. 

The {\em greedy-left} algorithm initializes the solution set $S^\ast=S_\ell$ and the corresponding total node weights $B^\ast=B_\ell$. 
The algorithm then determines the set of candidate nodes that can be added to $S^\ast$ without violating the budget, $S^C= \{ i \in V \setminus S^\ast| B^\ast + q_i \leq B\}$ . For each node $i \in S^C$ the algorithm calculates the relative change in utility 
$\delta_i=\frac{u_{ii} + \sum_{j \in S^\ast}u_{ij}}{q_i}$ that would result by adding node $i$ to $S^\ast$. The node with maximum relative change in utility, $i_{\max} = \arg \max_{i \in S^C}\delta_i$, is added to set $S^\ast$ and $B^\ast$ is increased by $q_{i_{\max}}$. The algorithm then updates set $S^C$ and, for each $i \in S^C$ adjusts the values of $\delta_i$  as follows : $\delta_i := \delta_i + \frac{u_{i_{\max} i}}{q_i}$.
The greedy-left algorithm repeats these steps to add additional nodes until the set of candidate nodes $S^C$ is empty. If the budget $B$ is smaller than the budget of the smallest positive budget breakpoint, $B<B_1$, then greedy-left algorithm initiates $S^\ast$ with the node that has the highest weighted degree.

The {\em greedy-right} algorithm initializes the solution set $S^\ast$ and the corresponding total node weights $B^\ast$ to be equal to $S_{\ell+1}$ and $B_{\ell+1}$, respectively. The algorithm then computes for each node $i \in S^\ast$ the relative, negative, change in utility $\delta_i=-\frac{\sum_{j \in S^\ast}u_{ij}}{q_i}$ that would result by removing node $i$ from $S^\ast$. The node with {minimum} absolute value $|\delta_i|$ 
relative change in utility, $i_{\max} = \arg \min_{i \in S^\ast}|\delta_i|$, is removed from set $S^\ast$ and $B^\ast$ is decreased by $q_{i_{\max}}$. Again, 
$\delta_i$s are updated by setting 
$\delta_i := \delta_i + \frac{u_{i_{\max} i}}{q_i}$. The greedy-right algorithm repeats these steps to remove additional nodes until $B^\ast \leq B$ after removing node $i_{\max}$. If $B^\ast < B$, the greedy-right algorithm calls the greedy-left algorithm to check if additional nodes can be added. 

We implemented the greedy-left and the greedy-right algorithms such that multiple budgets $B^k$ for $k=1, \ldots, K$ can be provided as an input. If more than one of these budgets, for example $B'$ and $B''$, lie between two consecutive breakpoints,  $B_{\ell} < B' < B'' < B_{\ell +1}$, the greedy-left and greedy-right algorithms avoid redundant computations and thus do not have to be reinitialized with $S_\ell$ or $S_{\ell + 1}$ for each of those budgets. 

\section{Experimental study}\label{sec:comparison}

We compare the performance of our breakpoints approach (QKBP) to leading approaches for the Quadratic Knapsack Problem. We measure the performance of each approach in terms of the value of the objective function and the running time. In Subsection~\ref{sec:implementation_bp_approach}, we describe the implementation of the breakpoints approach. In Subsection~\ref{sec:leading_approaches}, we list the leading algorithms. In Subsection~\ref{sec:problem_instances}, we describe the benchmark instances that we use. In Subsection~\ref{sec:numerical_results}, we report the numerical results. In Subsection~\ref{sec:results_discussion}, we state the main conclusions that can be drawn from the results. All experiments were executed on an HP workstation with two Intel Xeon CPUs with clock speed 3.30 GHz and 256 GB of RAM.

\subsection{Implementation of the breakpoints approach (QKBP)}\label{sec:implementation_bp_approach}

The breakpoints approach is implemented in Python 3.11 and uses a simple parametric cut procedure implemented in the programming language~C as a subroutine. The simple parametric cut procedure returns the source sets~$S_\lambda$ for a list of specific values of~$\lambda$. In contrast to a fully parametric cut procedure, the simple parametric cut procedure is not guaranteed to find all breakpoints. However, in our experiments we found that the available implementation of the simple parametric cut procedure is faster than the available implementation of the fully parametric cut, and finds most of the breakpoints. Figure~\ref{fig:implementation} visualizes the implementation as a flow chart. The algorithm takes as input the edge weights $u_{ij}$, the singleton utilities $u_{ii}$, the node weights $q_i$, and one or multiple budget values.
The simple parametric cut procedure \texttt{QKPsimparamHPF.exe} derives the values of the parameter $\lambda$ as follows: An obvious upper bound to the value of $\lambda$ is $\textup{ub}=\max_{i=1,\ldots,n}\frac{d^+_i+u_{ii}}{q_i}$, with $d^+_i$ denoting the weighted out-degree of node $i$. The software then computes, for a given integer parameter $p$, the $p$ equidistant values of $\lambda$ in the interval $[\textup{ub}, 0]$. Concerning the number of values of the parameter, $p$, we use $p=1{,}600$, which we found to deliver satisfactory results. However, this is an input parameter that can be set by the user.

\begin{figure*}
	\begin{center}
		\includegraphics[width=0.8\textwidth]{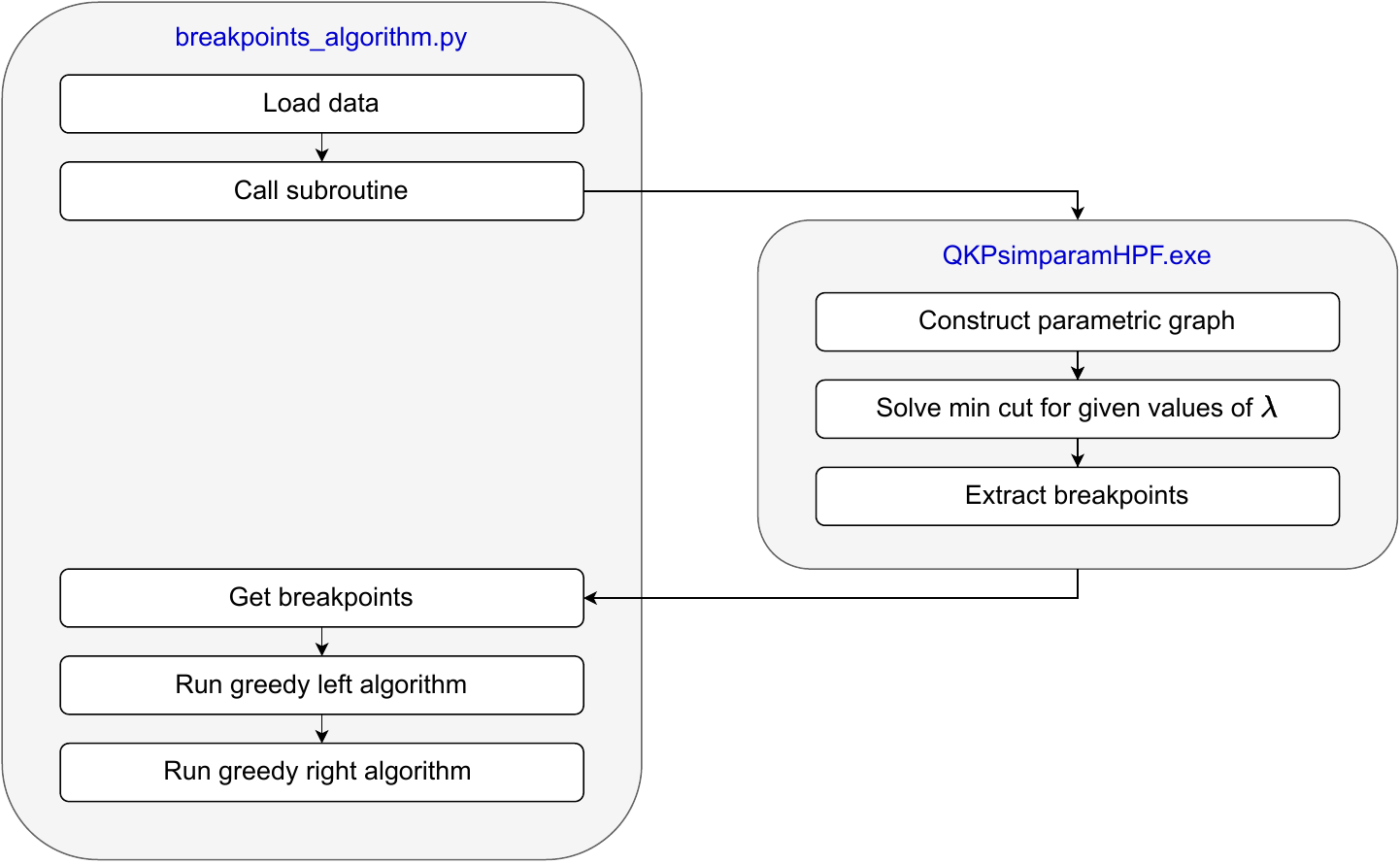}
	\end{center}
	\caption{Implementation of breakpoints approach}\label{fig:implementation}
\end{figure*}

\subsection{Leading approaches}\label{sec:leading_approaches}
We selected the leading approaches based on the recent review by \cite{cacchiani2022knapsack} of exact and heuristic methods for the Quadratic Knapsack Problem. According to this review the branch-and-bound algorithm Quadknap by \cite{caprara1999exact}, denoted here as QK, is a leading exact approach and the dynamic programming heuristic of \cite{QKPfomeni2014dynamic}, denoted here by DP, as well as the iterated hyperplane exploration approach of \cite{chen2017iterated}, denoted here by IHEA, are leading heuristics. We also include a metaheuristic that utilizes a multi-start greedy method that was reported to deliver high-quality solutions in \cite{julstrom2005greedy} for instances with up to 200 nodes. That method is referred to as the {\em relative-greedy} heuristic and is denoted here by RG. The most recent approach we include is the lifted-space dynamic programming algorithm, LDP, of \cite{fomeni2023lifted}. Finally, we include two approaches that are based on leading mathematical solvers, namely Gurobi and Hexaly. Table~\ref{table_tested_algorithms} provides for each approach the abbreviation that we use in the tables of this section, a reference to the original paper, the programming language of the implementation, and a link to the source code. Note that we re-implemented the RG heuristic of \cite{julstrom2005greedy} in Python since the original code is not publicly available. We applied the leading approaches with the default control parameter values or those recommended in the respective paper. 

\begin{table*}
	\small
	\begin{tabular}{p{5.25cm}lllp{1.6cm}}
		\toprule
		Name & Abbreviation & Reference & Programming language & Link to code \\ \midrule
		Breakpoints algorithm & QKBP & This paper & Python/C &  \href{https://github.com/phil85/breakpoints-algorithm-for-qkp}{\color{blue} GitHub} \\ \midrule
		Relative greedy heuristic & RG & \cite{julstrom2005greedy} & Re-implemented in Python & \href{https://github.com/phil85/greedy-algorithm-for-qkp}{\color{blue} GitHub} \\ \midrule
		Iterated hyperplane exploration approach & IHEA & \cite{chen2017iterated} & C++ & provided by authors \\ \midrule
		Lifted dynamic programming heuristic & LDP & \cite{fomeni2023lifted} & C & provided by author \\ \midrule		
		Dynamic programming-based heuristic & DP & \cite{QKPfomeni2014dynamic} & C & \href{https://sites.google.com/aims.ac.za/franklindjeumoufomeni/research/dynamic-programming-qkp-code}{\color{blue}Website} \\ \midrule
		Quadknap: specific branch-and-bound algorithm  & QK & \cite{caprara1999exact} & C & \href{http://hjemmesider.diku.dk/~pisinger/quadknap.c}{\color{blue} Website} \\ \midrule
		Gurobi-based approach & Gurobi & www.gurobi.com & C & \href{https://github.com/phil85/gurobi-based-approach-for-qkp}{\color{blue} GitHub} \\ \midrule
		Hexaly-based approach & Hexaly & www.hexaly.com & C++ & \href{https://github.com/phil85/hexaly-based-approach-for-qkp}{\color{blue} GitHub} \\ \midrule
	\end{tabular}
	\caption{Leading approaches included in the computational study.}\label{table_tested_algorithms}
\end{table*}

\subsection{Benchmarks}\label{sec:problem_instances}
To get a thorough understanding of the performance of the breakpoints algorithm (QKBP) compared to that of the leading approaches, we applied all approaches to a vast variety of more than $1{,}000$ benchmark instances. The benchmark instances originate in different application areas, consist of very large size instances that have not been tested to date in the literature on QKP, and vary systematically graph densities and budget values. The benchmark instances are organized in seven collections. Table~\ref{table_problem_instances} provides characteristics of each collection of benchmark instances. In the following, we briefly describe each collection. 

\begin{table*}
	\small
	\setlength{\tabcolsep}{8pt}
	\begin{tabularx}{\textwidth}{llrclll}
		\toprule
		Collection     & Nodes ($n$)  & \# Graphs & \# Budgets & Densities ($\Delta$)  & Introduced in                 & Link                                                                    \\ \midrule
		Standard-QKP   & 100--300     &    100 &       1 & 25--100               & \cite{gallo1980quadratic}    & \href{https://cedric.cnam.fr/~soutif/QKP/QKP.html}{\color{blue} Website}\\
		QKPGroupII   & 1000--2000     &    80 &       1 & 25--100               & \cite{yang2013effective}    & \href{https://leria-info.univ-angers.fr/~jinkao.hao/QKP.html}{\color{blue} Website}\\
		QKPGroupIII   & 5000--6000     &    40 &       1 & 25--100               & \cite{chen2017iterated}    & \href{https://leria-info.univ-angers.fr/~jinkao.hao/QKP.html}{\color{blue} Website}\\				
		Large-QKP        & 500--10,000  &     24 &       6 & 5--100                & This paper                    & \href{https://github.com/phil85/benchmark-instances-for-qkp}{\color{blue} GitHub}                                            \\
		Dispersion-QKP & 300--2,000   &     96 &       6 & 5--100                & \cite{pisinger2007solution}   & \href{https://github.com/phil85/benchmark-instances-for-qkp}{\color{blue} GitHub}                                            \\
		TeamFormation-QKP-1       & 1,021--9,269 &     14 &       6 & 0.06--2.15            & \cite{hocACDA2023breakpoints} & \href{https://github.com/phil85/benchmark-instances-for-qkp}{\color{blue} GitHub}                                            \\
		TeamFormation-QKP-2       & 1,000-10,000 &      6 &       6 & 12.38--13.43          & This paper                    & \href{https://github.com/phil85/benchmark-instances-for-qkp}{\color{blue} GitHub}                                            \\
		\bottomrule
	\end{tabularx}
	
	\caption{Characteristics of collections of benchmark instances. The range of densities ($\Delta$) is stated in percent.}\label{table_problem_instances}
\end{table*}

The \texttt{Standard-QKP} collection contains 100 randomly generated instances that are publicly available and often used as benchmark instances in the literature. The instances differ in the number of nodes in the graph ($n=100, 200, 300$) and the density of the graph ($\Delta=25\%, 50\%, 75\%, 100\%$). For all combinations of $n$ and $\Delta$ except (300, 75\%) and (300, 100\%), the collection includes ten instances, which were generated with the following procedure proposed by \cite{gallo1980quadratic}: for each node, an integer weight $q_i$ is chosen uniformly from $[1, 50]$, and for each pair of nodes $i$, $j$, where $i=1,\ldots,n$ and $j=i,\ldots,n$, an edge $[i, j]$ is added with probability $\Delta$. A random integer weight $u_{ij}$, uniformly chosen from $[1, 100]$, is assigned to each added edge. Note that this includes singleton utilities, when $i$=$j$. Finally, an integer budget $B$ is chosen uniformly in $[50, \sum_{j=1}^{n} q_j]$. This random problem generation procedure has been used in various papers such as \cite{caprara1999exact}, \cite{billionnet2004mixed}, \cite{pisinger2007solution}, \cite{yang2013effective} and \cite{chen2017iterated} to generate larger instances up to 6,000 nodes. The \texttt{QKPGroupII} and \texttt{QKPGroupIII} collections contain 80 and 40 large-sized instances, respectively. These instances have been generated with the above-described random procedure and are publicly available. 

The \texttt{Large-QKP} collection contains 144 instances with number of nodes varying from 500 to $10,000$ that we generated using a procedure very similar to the one proposed by \cite{gallo1980quadratic}. Unlike the \cite{gallo1980quadratic} procedure that has a single budget value for each graph, we generate multiple budget values for each graph where the budget $B$ is a fraction $\gamma$ of the sum of the node weights: $\lfloor \gamma \sum_{j=1}^n{q_j} \rfloor$, with $\gamma \in \{0.025, 0.05, 0.1, 0.25, 0.5, 0.75\}$. This allows us to analyze how the performance of the approaches is affected by the size of the budget relative to the sum of the node weights. For different combinations of numbers of nodes $n$ and graph densities $\Delta$, we generated 24 graphs, where $n$ ranges from 500--$10{,}000$ and the density $\Delta$ ranges from 5\% to 100\% (see Table~\ref{tbl:new_qkp}). For each graph, we created six instances corresponding to the six different budget values resulting in 144 instances. The resulting collection includes instances that are much larger in the number of nodes $n$ and have much lower densities compared to the instances that have previously been used in the QKP literature. \cite{schauer2016asymptotic} showed that under certain conditions (e.g., the edge weights are chosen independently), random generation procedures produce relatively easy instances for which a very simple heuristic, which sorts the nodes in non-decreasing order of their weights $q_i$ and includes the nodes greedily as long as they fit, will produce solutions whose objective function value is (asymptotically) very close to the optimal value. Since the generation procedure of \cite{gallo1980quadratic} satisfies the conditions established by \cite{schauer2016asymptotic}, we tested the approaches also on other collections which also contain instances where the edge weights $u_{ij}$ are not chosen independently and thus the conditions found in \cite{schauer2016asymptotic} are not satisfied.

The \texttt{Dispersion-QKP} collection contains 576 instances derived from instances of the dispersion problem (see \citealt{pisinger2007solution}) with number of nodes varying from $300$ to $2{,}000$. The dispersion problem is to determine the optimal placement of facilities among possible locations such that the pairwise distances between the facilities are maximized. This problem is a special case of a QKP where the distances between two locations $i$ and $j$ are the utilities $u_{ij}$, the cost of placing a facility at location $j$ is the $q_j$ and the available budget is $B$. \cite{pisinger2007solution} proposed a procedure to generate four types of dispersion benchmark instances that can be used as QKP instances. The four types differ with respect to the strategy that is used to determine the distances between the locations. Under the \texttt{geo} strategy, the locations are randomly selected within a 100 $\times$ 100 rectangle and the distances correspond to the Euclidean distances between these locations. Under the \texttt{wgeo} strategy, the locations are also randomly chosen within a 100 $\times$ 100 rectangle, and each location $j$ is assigned a random weight $\alpha_j$ chosen uniformly in the interval $[5, 10]$. The distances between two locations $i$ and $j$ are set to be $\alpha_i\alpha_j$ times the Euclidean distance between the locations $i$ and $j$. Under the \texttt{expo} strategy, each distance is randomly drawn from an exponential distribution with mean value 50 and under the \texttt{ran} strategy, each distance is an integer value chosen uniformly from the interval [1, 100]. Note that with the strategies \texttt{geo} and \texttt{wgeo}, the edge weights are not chosen independently. We follow the procedure described in \cite{pisinger2007solution} to generate QKP instances as follows: for a given number of nodes $n$, a given density $\Delta$, and a given distance computation strategy, we randomly choose an integer weight $q_j$ for each node from the interval [1, 100], and for each pair of nodes $i, j$, where $i=1,\ldots,n$ and $j=i+1,\ldots,n$, an edge $[i, j]$ is added with probability $\Delta$. Note that this procedure does not generate singleton utilities, meaning all $u_{ii}=0$. The utility of each added edge is computed according to the given distance computation strategy. For each of the four distance computation strategies, we generated 24 graphs by varying the number of nodes $n=300, 500, 1{,}000, 2{,}000$, and the densities of the graphs $\Delta \in \{5\%, 10\%, 25\%, 50\%, 75\%, 100\%\}$ resulting in a total of 96 graphs. For each graph, we generated six instances by setting the budget with the fractions $\gamma \in \{0.025, 0.05, 0.1, 0.25, 0.5, 0.75\}$ analogously to the instances in the collection \texttt{Large-QKP}.

The \texttt{TeamFormation-QKP-1} collection contains 84 instances derived from team formation benchmark instances of \cite{hocACDA2023breakpoints}. The team formation problem is to find a team of experts maximizing the collaboration utility of the team while satisfying extra constraints on required skills. The collaboration utility between experts $i$ and $j$ is computed by the Jaccard similarity $J(i, j)=\frac{|P_i \cap P_j|}{|P_i \cup P_j|}$, where $P_i$ denotes the set of projects that expert $i$ worked on. \cite{hocACDA2023breakpoints} transformed the four real team formation benchmark instances IMDB, DBLP, Bibsonomy, and StackOverflow into maximum diversity benchmark instances by omitting the extra constraints on required skills. In addition, they generated synthetic team formation benchmark instances for the maximum diversity problem. This was done by using lognormal distribution, with mean 4 and standard deviation 1, to select subsets of projects assigned to each expert. All of the synthetic instances have $70,000$ projects and $7,000$ experts. We add an integer cost $q_i$ to each expert $i$ selected randomly from the interval $[1, 10]$ for each of the four real and the ten synthetic team formation instances from \cite{hocACDA2023breakpoints}. We select the budget values $B$ with the fractions $\gamma \in \{0.025, 0.05, 0.1, 0.25, 0.5, 0.75\}$ analogously to the instances in the collection \texttt{Large-QKP}. These team formation QKP instances have a structure that is quite unique and are characterized by having a graph that is not fully connected and of very low density, often less than 1\% density. To the best of our knowledge, such instances have not been previously used to evaluate approaches for the QKP. 

The \texttt{TeamFormation-QKP-2} collection contains 36 randomly-generated instances of higher graph densities compared to the instances from \texttt{TeamFormation-QKP-1}. We use the procedure of \cite{hoc2023KDIR} for generating 6 new synthetic team formation benchmark instances with size $n \in \{1{,}000, 2{,}000, 4{,}000, 6{,}000, 8{,}000, 10{,}000\}$. We use the same lognormal distribution with mean equal to 4 and standard deviation equal to 1 to determine the projects assigned to each expert, but using $30,000$ rather than $70,000$ projects as in the synthetic instances of \texttt{TeamFormation-QKP-1}. This results in higher graph densities in the range $12.38\%-13.43\%$ as compared to the synthetic instances of \texttt{TeamFormation-QKP-1} that have graph densities in the range $0.06\%–2.15\%$. For each graph, we generated six instances by setting the budget with the fractions $\gamma \in \{0.025, 0.05, 0.1, 0.25, 0.5, 0.75\}$ analogously to the instances in the collection \texttt{Large-QKP}.

We made all instances and the codes to generate the synthetic benchmarks available on \href{https://github.com/phil85/benchmark-instances-for-qkp}{\color{blue} GitHub}.

\subsection{Experimental results}\label{sec:numerical_results}
In a first experiment, we applied all approaches to all instances with a time limit of 120 seconds. We will later increase this time limit to 3,600 seconds to analyze the impact on the solution quality. The proposed breakpoints approach never reached the time limit of 120 seconds, even for the largest instances, so we did not have to implement a stopping criterion. For the other approaches, we implemented the stopping criterion as follows. The RG approach is restarted $n$ times when applied to an instance with $n$ nodes, each time starting with a different node $i=1,\ldots,n$. We check the time limit before each restart and return the best solution found up to that point if the time limit has been reached. The DP and QK approaches do not provide the option to set a time limit. Therefore, we terminated the executable at the time limit and report that no solution was found within the time limit. For the IHEA approach, we checked the time limit after each iteration and returned the best solution found when the time limit was reached. For the Gurobi and Hexaly approaches, we set the solver time limit accordingly. For each instance, we report the detailed results on a \href{https://github.com/phil85/results-for-qkp-benchmark-instances}{\color{blue}GitHub} repository. In the remainder of this section, we present aggregated results for each of the seven collections of instances and draw conclusions based on them. 

We first present the results obtained for the collection \texttt{Standard-QKP} which contains 100 small instances with up to $n=300$ nodes. For each instance, we record the highest objective function value (OFV) achieved among all approaches and calculate the relative deviation of each approach's OFV from this best value. Table~\ref{tbl:standard_qkp} then reports for each approach and each combination of number of nodes $n$ and graph density $\Delta$ the average deviation from the best OFV in percent and the sum of the running times in seconds. There are 10 instances for each of those combinations. 

\begin{table*}
	\scriptsize
	\setlength{\tabcolsep}{5.1pt}
\begin{tabularx}{\textwidth}{rrr|crrrrrrr|crrrrrrr}
 \multicolumn{19}{c}{\texttt{Standard-QKP} collection} \\ \toprule
\multicolumn{3}{r}{} & \multicolumn{8}{c}{Avg deviation from best OFV (\%)} & \multicolumn{8}{c}{Sum runtime over ten budgets (s)} \\ \cmidrule(rl){4-11} \cmidrule(rl){12-19}\
$n$ & $\Delta$ & t$^{\textup{cut}}$ & \textbf{QKBP} & RG & IHEA & LDP & DP & QK & Gurobi & Hexaly & \textbf{QKBP} & RG & IHEA & LDP & DP & QK & Gurobi & Hexaly \\
\midrule
100 & 25 & 0.01 & 0.63 & 0.20 & \bfseries 0.00 & \textemdash & 0.26 & \bfseries 0.00 & \bfseries 0.00 & \bfseries 0.00 & \bfseries 0.02 & 1.2 & 13.1 & 219.8 & 3.2 & 3.3 & 4.7 & 33.0 \\
100 & 50 & 0.01 & 0.40 & 0.17 & \bfseries 0.00 & \bfseries 0.00 & 0.01 & \bfseries 0.00 & \bfseries 0.00 & \bfseries 0.00 & \bfseries 0.03 & 1.3 & 12.9 & 216.1 & 3.3 & 3.4 & 37.0 & 214.0 \\
100 & 75 & 0.01 & 0.76 & 0.38 & \bfseries 0.00 & \bfseries 0.00 & 0.01 & \bfseries 0.00 & \bfseries 0.00 & \bfseries 0.00 & \bfseries 0.03 & 1.3 & 12.5 & 218.6 & 3.2 & 3.0 & 146.0 & 675.0 \\
100 & 100 & 0.01 & 0.56 & 0.38 & \bfseries 0.00 & \bfseries 0.00 & \bfseries 0.00 & \bfseries 0.00 & \bfseries 0.00 & \bfseries 0.00 & \bfseries 0.03 & 1.4 & 11.9 & 212.3 & 3.1 & 1.9 & 258.6 & 586.0 \\
200 & 25 & 0.01 & 0.24 & 0.07 & \bfseries 0.00 & \textemdash & 0.02 & \textemdash & \bfseries 0.00 & \bfseries 0.00 & \bfseries 0.06 & 6.5 & 27.8 & 1,200.0 & 28.2 & 280.6 & 77.6 & 646.0 \\
200 & 50 & 0.01 & 0.17 & 0.08 & \bfseries 0.00 & \textemdash & 0.01 & \textemdash & \bfseries 0.00 & 0.05 & \bfseries 0.05 & 6.7 & 30.0 & 1,200.0 & 29.0 & 255.0 & 505.8 & 983.0 \\
200 & 75 & 0.01 & 0.56 & 0.25 & \bfseries 0.00 & \textemdash & 0.01 & \bfseries 0.00 & \bfseries 0.00 & 0.01 & \bfseries 0.05 & 5.2 & 35.9 & 1,124.2 & 27.0 & 12.9 & 570.1 & 1,200.0 \\
200 & 100 & 0.01 & 0.43 & 0.19 & \bfseries 0.00 & \textemdash & \bfseries 0.00 & \bfseries 0.00 & 0.01 & 0.43 & \bfseries 0.06 & 5.6 & 28.3 & 949.1 & 25.5 & 11.0 & 896.5 & 1,200.0 \\
300 & 25 & 0.02 & 0.40 & 0.16 & \bfseries 0.00 & \textemdash & 0.08 & \textemdash & \bfseries 0.00 & 0.01 & \bfseries 0.08 & 12.6 & 29.4 & 1,081.6 & 113.5 & 294.9 & 223.7 & 936.0 \\
300 & 50 & 0.02 & 0.18 & 0.03 & \bfseries 0.00 & \textemdash & \bfseries 0.00 & \textemdash & 0.16 & 0.89 & \bfseries 0.12 & 14.8 & 32.5 & 1,200.0 & 118.3 & 327.6 & 1,029.5 & 1,200.0 \\
 \midrule Avg &  &  & 0.43 & 0.19 & \bfseries 0.00 & \textemdash & 0.04 & \textemdash & 0.02 & 0.14 & \bfseries 0.05 & 5.7 & 23.4 & 762.2 & 35.4 & 119.4 & 375.0 & 767.3 \\
Min &  &  & 0.17 & 0.03 & \bfseries 0.00 & \bfseries 0.00 & \bfseries 0.00 & \bfseries 0.00 & \bfseries 0.00 & \bfseries 0.00 & \bfseries 0.02 & 1.2 & 11.9 & 212.3 & 3.1 & 1.9 & 4.7 & 33.0 \\
Max &  &  & 0.76 & 0.38 & \bfseries 0.00 & \textemdash & 0.26 & \textemdash & 0.16 & 0.89 & \bfseries 0.12 & 14.8 & 35.9 & 1,200.0 & 118.3 & 327.6 & 1,029.5 & 1,200.0 \\
\bottomrule \\ [-1.5ex] \multicolumn{8}{l}{\textbf{QKBP} is the contribution in this paper}
\end{tabularx}

	\caption{Results for the instances of the \texttt{Standard-QKP} collection: Each row represents ten graphs with the same values of $n$ and $\Delta$. The budget values are randomly selected in the interval $[50, \sum_{j=1}^{n} q_j]$. The abbreviation OFV stands for objective function value and the column $t^{\textup{cut}}$ reports the average running time of the simple parametric cut procedure in seconds. The time limit for each instance is 120 seconds. The hyphen (--) indicates that the respective approach did not find a solution for at least one of the ten instances within the time limit.}\label{tbl:standard_qkp}	
\end{table*}

The column $t^{\textup{cut}}$ reports the average running time of the simple parametric cut procedure used as a subroutine in the breakpoints approach, measured in seconds. Hence, the running time reported in column 12 does not include the time for the simple parametric cut procedure. We report the time for the simple parametric cut procedure separately in all tables in this section, because the breakpoints approach performs this procedure only once for multiple budget values for the same graph. This is a distinct advantage of our algorithm when it comes to sensitivity analysis of the solution as the value of the budget is modified. To analyze the trade-off between increasing the budget and benefiting from additional utility, our approach solves the simple parametric cut procedure only once instead of computing a solution for each budget value. The greedy right and left algorithms are not restarted when multiple budgets fall between the budgets of two consecutive breakpoints. This is in contrast to the other approaches, which require repeating the entire algorithm for each budget values. 

The hyphen (--) indicates that the respective approach did not find a solution for at least one of the ten instances with combination ($n$, $\Delta$) within the time limit. The last three rows of the table report the average, the minimum and the maximum value for each column. For each row, we highlight the best values (lowest deviation and lowest running time) in bold. For most instances, the exact approaches (QK, Gurobi, Hexaly) are able to identify an optimal solution within the time limit. Among the heuristic approaches, IHEA consistently produces the best solutions closely followed by the other approaches (QKBP, RG, DP). The LDP approach often reaches the time limit before it returns a solution. Our breakpoints approach (QKBP) is much faster than the other approaches and still delivers competitive solution quality. The RG approach is similar to the greedy-left algorithm that we use as a subroutine in our breakpoints approach. An important difference, however, is that the RG approach is applied multiple times, each time restarting with a different seed item, resulting in high running times for medium and large instances.

\begin{table*}
	\scriptsize
	\setlength{\tabcolsep}{4.65pt}
\begin{tabularx}{\textwidth}{rrr|rrrrrrrr|rrrrrrrr}
 \multicolumn{19}{c}{\texttt{QKP-GroupII} collection} \\ \toprule
\multicolumn{3}{r}{} & \multicolumn{8}{c}{Avg deviation from best OFV (\%)} & \multicolumn{8}{c}{Sum runtime over ten budgets (s)} \\ \cmidrule(rl){4-11} \cmidrule(rl){12-19}\
$n$ & $\Delta$ & t$^{\textup{cut}}$ & \textbf{QKBP} & RG & IHEA & LDP & DP & QK & Gurobi & Hexaly & \textbf{QKBP} & RG & IHEA & LDP & DP & QK & Gurobi & Hexaly \\
\midrule
1,000 & 25 & 0.10 & 0.21 & 0.02 & \bfseries 0.01 & \textemdash & \textemdash & \textemdash & 611.28 & 0.76 & \bfseries 0.47 & 231.2 & 113.7 & 1,200.0 & 1,200.0 & 1,200.0 & 1,163.3 & 1,200.0 \\
1,000 & 50 & 0.12 & 0.07 & 0.04 & \bfseries 0.00 & \textemdash & \textemdash & \textemdash & 583.47 & 2.03 & \bfseries 0.46 & 279.6 & 136.7 & 1,200.0 & 1,200.0 & 1,200.0 & 1,166.9 & 1,200.0 \\
1,000 & 75 & 0.14 & 0.23 & 0.06 & \bfseries 0.00 & \textemdash & \textemdash & \textemdash & 585.59 & 3.35 & \bfseries 0.66 & 255.0 & 156.2 & 1,200.0 & 1,200.0 & 1,200.0 & 1,215.8 & 1,200.0 \\
1,000 & 100 & 0.16 & 0.18 & 0.13 & \bfseries 0.00 & \textemdash & \textemdash & \textemdash & 582.96 & 4.21 & \bfseries 0.68 & 254.9 & 170.5 & 1,200.0 & 1,200.0 & 1,200.0 & 1,232.0 & 1,207.0 \\
2,000 & 25 & 0.27 & 0.04 & \bfseries 0.00 & \bfseries 0.00 & \textemdash & \textemdash & \textemdash & 634.27 & 3.24 & \bfseries 2.76 & 1,103.9 & 475.1 & 1,200.0 & 1,200.0 & 1,200.0 & 1,221.6 & 1,209.0 \\
2,000 & 50 & 0.38 & 0.07 & 0.05 & \bfseries 0.00 & \textemdash & \textemdash & \textemdash & 418.98 & 5.69 & \bfseries 2.87 & 1,144.5 & 504.9 & 1,200.0 & 1,200.0 & 1,200.0 & 1,219.8 & 1,220.0 \\
2,000 & 75 & 0.52 & 0.06 & 0.05 & \bfseries 0.00 & \textemdash & \textemdash & \textemdash & 294.63 & 7.39 & \bfseries 3.00 & 1,199.8 & 602.3 & 1,200.0 & 1,200.0 & 1,200.0 & 1,215.1 & 1,209.0 \\
2,000 & 100 & 0.65 & 0.09 & 0.08 & \bfseries 0.00 & \textemdash & \textemdash & \textemdash & 504.31 & 9.05 & \bfseries 3.01 & 1,152.5 & 675.3 & 1,200.0 & 1,200.0 & 1,200.0 & 1,223.6 & 1,200.0 \\
 \midrule Avg &  &  & 0.12 & 0.05 & \bfseries 0.00 & \textemdash & \textemdash & \textemdash & 526.94 & 4.46 & \bfseries 1.74 & 702.7 & 354.3 & 1,200.0 & 1,200.0 & 1,200.0 & 1,207.2 & 1,205.6 \\
Min &  &  & 0.04 & \bfseries 0.00 & \bfseries 0.00 & \textemdash & \textemdash & \textemdash & 294.63 & 0.76 & \bfseries 0.46 & 231.2 & 113.7 & 1,200.0 & 1,200.0 & 1,200.0 & 1,163.3 & 1,200.0 \\
Max &  &  & 0.23 & 0.13 & \bfseries 0.01 & \textemdash & \textemdash & \textemdash & 634.27 & 9.05 & \bfseries 3.01 & 1,199.8 & 675.3 & 1,200.0 & 1,200.0 & 1,200.0 & 1,232.0 & 1,220.0 \\
\bottomrule \\ [-1.5ex] \multicolumn{8}{l}{\textbf{QKBP} is the contribution in this paper}
\end{tabularx}

	\caption{Results for the instances of the \texttt{QKP-GroupII} collection: Each row represents ten graphs with the same values of $n$ and $\Delta$. The budget values are randomly selected in the interval $[50, \sum_{j=1}^{n} q_j]$. The abbreviation OFV stands for objective function value and the column $t^{\textup{cut}}$ reports the average running time of the simple parametric cut procedure in seconds. The time limit for each instance is 120 seconds. The hyphen (--) indicates that the respective approach did not find a solution for at least one of the ten instances within the time limit.}\label{tbl:qkpgroupII}	
\end{table*}
\begin{table*}
	\scriptsize
	\setlength{\tabcolsep}{5.3pt}
\begin{tabularx}{\textwidth}{rrr|rrrrrrrr|rrrrrrrr}
 \multicolumn{19}{c}{\texttt{QKP-GroupIII} collection} \\ \toprule
\multicolumn{3}{r}{} & \multicolumn{8}{c}{Avg deviation from best OFV (\%)} & \multicolumn{8}{c}{Sum runtime over five budgets (s)} \\ \cmidrule(rl){4-11} \cmidrule(rl){12-19}\
$n$ & $\Delta$ & t$^{\textup{cut}}$ & \textbf{QKBP} & RG & IHEA & LDP & DP & QK & Gurobi & Hexaly & \textbf{QKBP} & RG & IHEA & LDP & DP & QK & Gurobi & Hexaly \\
\midrule
5,000 & 25 & 1.27 & 0.03 & 0.02 & \bfseries 0.00 & \textemdash & \textemdash & \textemdash & \textemdash & 9.33 & \bfseries 9.93 & 602.4 & 727.4 & 600.0 & 600.0 & 600.0 & 602.5 & 600.0 \\
5,000 & 50 & 2.13 & 0.02 & 0.01 & \bfseries 0.00 & \textemdash & \textemdash & \textemdash & \textemdash & 12.99 & \bfseries 9.96 & 602.2 & 758.2 & 600.0 & 600.0 & 600.0 & 605.0 & 600.0 \\
5,000 & 75 & 3.16 & 0.04 & 0.03 & \bfseries 0.00 & \textemdash & \textemdash & \textemdash & \textemdash & 18.39 & \bfseries 10.78 & 601.1 & 785.1 & 600.0 & 600.0 & 600.0 & 608.7 & 600.0 \\
5,000 & 100 & 4.11 & 0.02 & 0.02 & \bfseries 0.00 & \textemdash & \textemdash & \textemdash & \textemdash & 24.54 & \bfseries 9.87 & 601.2 & 806.8 & 600.0 & 600.0 & 600.0 & 612.2 & 605.0 \\
6,000 & 25 & 1.74 & 0.05 & 0.03 & \bfseries 0.00 & \textemdash & \textemdash & \textemdash & \textemdash & 9.74 & \bfseries 14.94 & 603.1 & 789.0 & 600.0 & 600.0 & 600.0 & 603.8 & 600.0 \\
6,000 & 50 & 3.37 & 0.04 & 0.03 & \bfseries 0.00 & \textemdash & \textemdash & \textemdash & \textemdash & 15.53 & \bfseries 16.17 & 604.5 & 830.7 & 600.0 & 600.0 & 600.0 & 607.6 & 600.0 \\
6,000 & 75 & 5.12 & 0.05 & 0.04 & \bfseries 0.00 & \textemdash & \textemdash & \textemdash & \textemdash & 24.21 & \bfseries 14.17 & 602.8 & 862.5 & 600.0 & 600.0 & 600.0 & 611.4 & 600.0 \\
6,000 & 100 & 6.33 & 0.01 & 0.01 & \bfseries 0.00 & \textemdash & \textemdash & \textemdash & \textemdash & 39.70 & \bfseries 14.55 & 604.1 & 902.3 & 600.0 & 600.0 & 600.0 & 616.4 & 600.0 \\
 \midrule Avg &  &  & 0.03 & 0.02 & \bfseries 0.00 & \textemdash & \textemdash & \textemdash & \textemdash & 19.30 & \bfseries 12.55 & 602.7 & 807.7 & 600.0 & 600.0 & 600.0 & 608.5 & 600.6 \\
Min &  &  & 0.01 & 0.01 & \bfseries 0.00 & \textemdash & \textemdash & \textemdash & \textemdash & 9.33 & \bfseries 9.87 & 601.1 & 727.4 & 600.0 & 600.0 & 600.0 & 602.5 & 600.0 \\
Max &  &  & 0.05 & 0.04 & \bfseries 0.00 & \textemdash & \textemdash & \textemdash & \textemdash & 39.70 & \bfseries 16.17 & 604.5 & 902.3 & 600.0 & 600.0 & 600.0 & 616.4 & 605.0 \\
\bottomrule \\ [-1.5ex] \multicolumn{8}{l}{\textbf{QKBP} is the contribution in this paper}
\end{tabularx}

	\caption{Results for the instances of the \texttt{QKP-GroupIII} collection: Each row represents five graphs with the same values of $n$ and $\Delta$. The budget values are randomly selected in the interval $[50, \sum_{j=1}^{n} q_j]$. The abbreviation OFV stands for objective function value and the column $t^{\textup{cut}}$ reports the average running time of the simple parametric cut procedure in seconds. The time limit for each instance is 120 seconds. The hyphen (--) indicates that the respective approach did not find a solution for at least one of the five instances within the time limit.}\label{tbl:qkpgroupIII}	
\end{table*}

Tables~\ref{tbl:qkpgroupII} and \ref{tbl:qkpgroupIII} report the results for the collections \textbf{QKPGroupII} and \textbf{QKPGroupIII}, respectively. These collections contain much larger instances in terms of number of items ($n$) and thus can no longer be solved optimally by the exact approaches (QK, Gurobi, Hexaly). IHEA produces the best solutions closely followed by RG and QKBP. The dynamic programming-based approaches (LDP, DP, QK) do not find solutions within the time limit for any of these instances. QKBP is again orders of magnitudes faster while delivering almost the same solution quality as IHEA.

\begin{table*}
	\scriptsize
	\setlength{\tabcolsep}{4.6pt}
\begin{tabularx}{\textwidth}{rrr|rrrrrrrr|rrrrrrrr}
 \multicolumn{19}{c}{\texttt{Large-QKP} collection} \\ \toprule
\multicolumn{3}{r}{} & \multicolumn{8}{c}{Avg deviation from best OFV (\%)} & \multicolumn{8}{c}{Sum runtime over six budgets (s)} \\ \cmidrule(rl){4-11} \cmidrule(rl){12-19}\
$n$ & $\Delta$ & t$^{\textup{cut}}$ & \textbf{QKBP} & RG & IHEA & LDP & DP & QK & Gurobi & Hexaly & \textbf{QKBP} & RG & IHEA & LDP & DP & QK & Gurobi & Hexaly \\
\midrule
500 & 5 & 0.03 & 0.38 & 0.30 & 0.03 & \textemdash & 39.63 & \textemdash & \bfseries 0.00 & \bfseries 0.00 & \bfseries 0.03 & 20.0 & 33.9 & 720.0 & 286.0 & 720.0 & 9.2 & 294.0 \\
500 & 10 & 0.02 & 0.25 & 0.07 & \bfseries 0.00 & \textemdash & 94.30 & \textemdash & \bfseries 0.00 & 0.02 & \bfseries 0.03 & 21.4 & 36.5 & 720.0 & 238.7 & 720.0 & 27.9 & 585.0 \\
500 & 15 & 0.03 & 0.09 & 0.07 & \bfseries 0.00 & \textemdash & 150.03 & \textemdash & \bfseries 0.00 & 0.14 & \bfseries 0.04 & 21.0 & 39.0 & 720.0 & 222.6 & 720.0 & 248.3 & 720.0 \\
500 & 20 & 0.03 & 0.19 & 0.03 & \bfseries 0.00 & \textemdash & 195.72 & \textemdash & \bfseries 0.00 & 0.24 & \bfseries 0.04 & 21.4 & 40.7 & 720.0 & 207.4 & 720.0 & 491.6 & 720.0 \\
500 & 25 & 0.03 & 0.19 & 0.03 & \bfseries 0.00 & \textemdash & 266.17 & \textemdash & \bfseries 0.00 & 0.42 & \bfseries 0.03 & 21.6 & 35.1 & 720.0 & 207.1 & 720.0 & 435.1 & 720.0 \\
500 & 50 & 0.03 & 0.31 & 0.07 & \bfseries 0.00 & \textemdash & 296.93 & \textemdash & 0.09 & 0.92 & \bfseries 0.04 & 21.0 & 36.6 & 720.0 & 204.7 & 720.0 & 694.9 & 720.0 \\
500 & 75 & 0.03 & 0.44 & 0.13 & \bfseries 0.00 & \textemdash & 425.43 & \textemdash & 454.94 & 1.81 & \bfseries 0.04 & 20.8 & 46.0 & 720.0 & 178.0 & 720.0 & 720.4 & 720.0 \\
500 & 100 & 0.05 & 0.33 & 0.18 & \bfseries 0.00 & \textemdash & 463.68 & \textemdash & 256.09 & 2.06 & \bfseries 0.04 & 22.7 & 30.4 & 720.0 & 207.4 & 720.0 & 720.6 & 720.0 \\
1,000 & 5 & 0.08 & 0.13 & 0.14 & \bfseries 0.00 & \textemdash & \textemdash & \textemdash & 0.25 & 0.25 & \bfseries 0.10 & 135.0 & 74.8 & 720.0 & 720.0 & 720.0 & 583.8 & 720.0 \\
1,000 & 10 & 0.09 & 0.08 & 0.04 & \bfseries 0.00 & \textemdash & \textemdash & \textemdash & 3.02 & 0.34 & \bfseries 0.10 & 145.1 & 71.5 & 720.0 & 720.0 & 720.0 & 721.6 & 720.0 \\
1,000 & 15 & 0.09 & 0.11 & 0.04 & \bfseries 0.00 & \textemdash & \textemdash & \textemdash & 1.10 & 0.62 & \bfseries 0.11 & 151.9 & 85.3 & 720.0 & 720.0 & 720.0 & 720.4 & 720.0 \\
1,000 & 20 & 0.09 & 0.15 & 0.01 & \bfseries 0.00 & \textemdash & \textemdash & \textemdash & 691.78 & 0.73 & \bfseries 0.11 & 144.3 & 72.8 & 720.0 & 720.0 & 720.0 & 720.4 & 720.0 \\
1,000 & 25 & 0.11 & 0.04 & 0.01 & \bfseries 0.00 & \textemdash & \textemdash & \textemdash & 753.10 & 1.07 & \bfseries 0.12 & 135.8 & 84.7 & 720.0 & 720.0 & 720.0 & 720.8 & 720.0 \\
1,000 & 50 & 0.11 & 0.14 & \bfseries 0.00 & \bfseries 0.00 & \textemdash & \textemdash & \textemdash & 878.06 & 2.09 & \bfseries 0.11 & 140.4 & 91.6 & 720.0 & 720.0 & 720.0 & 721.1 & 720.0 \\
2,000 & 5 & 0.20 & 0.11 & 0.05 & \bfseries 0.00 & \textemdash & \textemdash & \textemdash & 509.97 & 0.57 & \bfseries 0.49 & 610.8 & 251.3 & 720.0 & 720.0 & 720.0 & 720.5 & 720.0 \\
2,000 & 10 & 0.20 & 0.08 & 0.01 & \bfseries 0.00 & \textemdash & \textemdash & \textemdash & 826.14 & 1.01 & \bfseries 0.49 & 605.1 & 260.6 & 720.0 & 720.0 & 720.0 & 728.3 & 720.0 \\
2,000 & 15 & 0.24 & 0.06 & 0.01 & \bfseries 0.00 & \textemdash & \textemdash & \textemdash & 1,051.86 & 1.72 & \bfseries 0.50 & 616.8 & 301.4 & 720.0 & 720.0 & 720.0 & 738.6 & 720.0 \\
2,000 & 20 & 0.25 & 0.09 & 0.03 & \bfseries 0.00 & \textemdash & \textemdash & \textemdash & 844.24 & 2.41 & \bfseries 0.50 & 616.5 & 309.0 & 720.0 & 720.0 & 720.0 & 738.5 & 721.0 \\
2,000 & 25 & 0.26 & 0.07 & \bfseries 0.00 & \bfseries 0.00 & \textemdash & \textemdash & \textemdash & 1,147.67 & 2.83 & \bfseries 0.51 & 607.3 & 309.5 & 720.0 & 720.0 & 720.0 & 733.3 & 725.0 \\
5,000 & 5 & 0.59 & 0.03 & 0.01 & \bfseries 0.00 & \textemdash & \textemdash & \textemdash & 950.65 & 2.82 & \bfseries 3.37 & 722.5 & 840.3 & 720.0 & 720.0 & 720.0 & 722.4 & 726.0 \\
5,000 & 10 & 0.78 & 0.01 & 0.01 & \bfseries 0.00 & \textemdash & \textemdash & \textemdash & 1,061.24 & 5.28 & \bfseries 3.50 & 722.0 & 854.2 & 720.0 & 720.0 & 720.0 & 727.5 & 738.0 \\
5,000 & 15 & 1.06 & 0.03 & 0.02 & \bfseries 0.00 & \textemdash & \textemdash & \textemdash & \textemdash & 6.96 & \bfseries 3.48 & 721.4 & 863.6 & 720.0 & 720.0 & 720.0 & 722.5 & 729.0 \\
5,000 & 20 & 1.09 & 0.03 & 0.03 & \bfseries 0.00 & \textemdash & \textemdash & \textemdash & \textemdash & 7.55 & \bfseries 3.37 & 721.4 & 875.6 & 720.0 & 720.0 & 720.0 & 722.5 & 720.0 \\
10,000 & 5 & 1.53 & \bfseries 0.00 & 0.01 & \bfseries 0.00 & \textemdash & \textemdash & \textemdash & \textemdash & 7.06 & \bfseries 15.10 & 729.8 & 1,228.6 & 720.0 & 720.0 & 720.0 & 722.9 & 720.0 \\
 \midrule Avg &  &  & 0.14 & 0.05 & \bfseries 0.00 & \textemdash & \textemdash & \textemdash & \textemdash & 2.04 & \bfseries 1.34 & 320.7 & 286.4 & 720.0 & 553.0 & 720.0 & 617.2 & 698.2 \\
Min &  &  & \bfseries 0.00 & \bfseries 0.00 & \bfseries 0.00 & \textemdash & 39.63 & \textemdash & \bfseries 0.00 & \bfseries 0.00 & \bfseries 0.03 & 20.0 & 30.4 & 720.0 & 178.0 & 720.0 & 9.2 & 294.0 \\
Max &  &  & 0.44 & 0.30 & \bfseries 0.03 & \textemdash & \textemdash & \textemdash & \textemdash & 7.55 & \bfseries 15.10 & 729.8 & 1,228.6 & 720.0 & 720.0 & 720.0 & 738.6 & 738.0 \\
\bottomrule \\ [-1.5ex] \multicolumn{8}{l}{\textbf{QKBP} is the contribution in this paper}
\end{tabularx}

	\caption{Results for the instances of the \texttt{Large-QKP} collection: Each row represents a graph with a specific number of nodes $n$ and a density $\Delta$. For each graph, there are six instances with different budget values. These budget values are chosen as fractions of the total node weights of the graph. The fractions used are 0.025, 0.05, 0.1, 0.25, 0.5, and 0.75. The abbreviation OFV stands for objective function value and the column $t^{\textup{cut}}$ reports the running time of the simple parametric cut procedure in seconds. Note that the simple parametric cut procedure is applied only once for all six instances of the same graph. The time limit for each instance is 120 seconds. The hyphen (--) indicates that the respective approach did not find a solution for at least one of the six instances within the time limit.}\label{tbl:new_qkp}
\end{table*}

Next, we present the results obtained for the collection \textbf{Large-QKP} which contains 144 medium and large instances with up to $n=10,000$ nodes. Table~\ref{tbl:new_qkp} reports for each approach and each combination of number of nodes $n$ and graph density $\Delta$ the average deviation from the best OFV in percent and the total running time in seconds. This time each row of the table represents a graph with a specific number of nodes $n$ and a density $\Delta$. For each graph, there are six instances with different budget values. These budget values are chosen as fractions of the total node weights of the graph. The fractions used are 0.025, 0.05, 0.1, 0.25, 0.5, 0.75. The results demonstrate the scalability of our QKBP approach. It is considerably faster than all other approaches and still delivers high-quality solutions, with average deviations from the best OFV often below 0.2\%. The RG and IHEA approaches perform very well in terms of solution quality but they are considerably slower compared to the breakpoints approach QKBP. The good performance of the RG approach for these instances confirms the finding of \cite{schauer2016asymptotic} that for the standard QKP instances generated with the procedure of \cite{gallo1980quadratic}, even simple heuristics may determine solutions whose objective value are (asypmtotically) close to the optimal value. Among the exact approaches (QK, Gurobi, and Hexaly), Hexaly scales the best but cannot compete with the best heuristics both in terms of solution quality and running time. The dynamic programming-based approaches (LDP, DP and QK) do not scale well. Note that in other papers the approaches LDP, DP and QK were successfully applied to instances with up to $n=1{,}500$ nodes but in those papers a higher time limit was employed and the budget fraction was not varied systematically. It turns out that the performance of the dynamic programming-based approaches strongly depends on the budget fraction. Low budget fractions turn out to be more challenging. The IHEA algorithm exceeded the time limit for some large instances. This happened because we only check the elapsed time at the end of each iteration. For large instances, the duration of each iteration increases, leading to this issue.

We proceed to present the results obtained for the collection \textbf{Dispersion-QKP}, which contains 576 instances with up to $n=2,000$ nodes. These instances are divided into four groups (\texttt{geo}, \texttt{wgeo}, \texttt{expo}, and \texttt{ran}) based on the strategy used to generate the edge weights (see Section~\ref{sec:problem_instances}). The instances of the groups \texttt{geo} and \texttt{wgeo} do not belong to the family of ``easy" instances defined in \cite{schauer2016asymptotic}, since the edge weights are not chosen independently. However, the results turn out to be similar for all four groups. Therefore, we report here only the results for the strategy \texttt{geo} in Table~\ref{tbl:dispersion_qkp_geo}. The results for the other three strategies are reported in the appendix in Tables~\ref{tbl:dispersion_qkp_wgeo}--\ref{tbl:dispersion_qkp_ran}. The structure of the four tables is identical to that of Table~\ref{tbl:new_qkp}, i.e., each row reports the results for six instances that have the same underlying graph but differ with respect to the budget values. 

\begin{table*}
	\scriptsize
	\setlength{\tabcolsep}{4.9pt}
\begin{tabularx}{\textwidth}{rrr|rrrrrrrr|rrrrrrrr}
 \multicolumn{19}{c}{\texttt{Dispersion-QKP (geo)} collection} \\ \toprule
\multicolumn{3}{r}{} & \multicolumn{8}{c}{Avg deviation from best OFV (\%)} & \multicolumn{8}{c}{Sum runtime over six budgets (s)} \\ \cmidrule(rl){4-11} \cmidrule(rl){12-19}\
$n$ & $\Delta$ & t$^{\textup{cut}}$ & \textbf{QKBP} & RG & IHEA & LDP & DP & QK & Gurobi & Hexaly & \textbf{QKBP} & RG & IHEA & LDP & DP & QK & Gurobi & Hexaly \\
\midrule
300 & 5 & 0.02 & 0.64 & 0.45 & 0.03 & \textemdash & 32.56 & \textemdash & \bfseries 0.00 & \bfseries 0.00 & \bfseries 0.02 & 5.7 & 21.8 & 720.0 & 53.6 & 482.8 & 5.0 & 84.0 \\
300 & 10 & 0.02 & 0.29 & 0.20 & \bfseries 0.00 & \textemdash & 108.12 & \textemdash & \bfseries 0.00 & \bfseries 0.00 & \bfseries 0.02 & 6.0 & 23.7 & 720.0 & 46.5 & 483.7 & 10.6 & 196.0 \\
300 & 25 & 0.02 & 0.12 & 0.05 & \bfseries 0.00 & \textemdash & 196.31 & \textemdash & \bfseries 0.00 & \bfseries 0.00 & \bfseries 0.02 & 6.1 & 25.2 & 720.0 & 41.8 & 720.0 & 58.4 & 547.0 \\
300 & 50 & 0.02 & 0.29 & 0.02 & \bfseries 0.00 & \textemdash & 224.26 & \textemdash & \bfseries 0.00 & 0.13 & \bfseries 0.02 & 6.1 & 25.5 & 720.0 & 41.0 & 720.0 & 184.6 & 659.0 \\
300 & 75 & 0.02 & 0.38 & 0.05 & \bfseries 0.00 & \textemdash & 260.68 & \textemdash & \bfseries 0.00 & 0.38 & \bfseries 0.02 & 6.1 & 25.8 & 720.0 & 41.1 & 720.0 & 303.1 & 683.0 \\
300 & 100 & 0.03 & 0.64 & 0.02 & \bfseries 0.00 & \textemdash & 314.22 & \textemdash & \bfseries 0.00 & 0.74 & \bfseries 0.02 & 6.2 & 23.9 & 720.0 & 40.6 & 368.1 & 288.3 & 720.0 \\
500 & 5 & 0.03 & 0.25 & 0.34 & \bfseries 0.00 & \textemdash & 66.38 & \textemdash & \bfseries 0.00 & \bfseries 0.00 & \bfseries 0.03 & 20.2 & 35.1 & 720.0 & 328.0 & 720.0 & 13.0 & 479.0 \\
500 & 10 & 0.03 & 0.16 & 0.11 & \bfseries 0.00 & \textemdash & 139.84 & \textemdash & \bfseries 0.00 & 0.08 & \bfseries 0.04 & 20.0 & 44.3 & 720.0 & 273.6 & 720.0 & 104.8 & 712.0 \\
500 & 25 & 0.03 & 0.11 & 0.05 & \bfseries 0.00 & \textemdash & 211.78 & \textemdash & 0.74 & 0.14 & \bfseries 0.04 & 20.3 & 34.0 & 720.0 & 251.1 & 720.0 & 542.1 & 720.0 \\
500 & 50 & 0.03 & 0.18 & 0.04 & \bfseries 0.00 & \textemdash & 316.61 & \textemdash & 1.19 & 0.31 & \bfseries 0.04 & 20.9 & 33.2 & 720.0 & 238.1 & 720.0 & 691.1 & 720.0 \\
500 & 75 & 0.05 & 0.36 & 0.13 & \bfseries 0.00 & \textemdash & 342.80 & \textemdash & 387.88 & 0.78 & \bfseries 0.04 & 20.4 & 37.0 & 720.0 & 231.0 & 720.0 & 720.5 & 720.0 \\
500 & 100 & 0.03 & 0.36 & 0.07 & \bfseries 0.00 & \textemdash & 356.46 & \textemdash & 857.86 & 0.40 & \bfseries 0.04 & 20.7 & 38.0 & 720.0 & 229.1 & 720.0 & 720.5 & 720.0 \\
1,000 & 5 & 0.06 & 0.12 & 0.15 & \bfseries 0.00 & \textemdash & \textemdash & \textemdash & \bfseries 0.00 & 0.11 & \bfseries 0.09 & 132.3 & 72.3 & 720.0 & 720.0 & 720.0 & 373.9 & 720.0 \\
1,000 & 10 & 0.08 & 0.04 & 0.02 & \bfseries 0.00 & \textemdash & \textemdash & \textemdash & 0.01 & 0.25 & \bfseries 0.10 & 135.2 & 67.0 & 720.0 & 720.0 & 720.0 & 580.1 & 720.0 \\
1,000 & 25 & 0.09 & 0.11 & 0.03 & \bfseries 0.00 & \textemdash & \textemdash & \textemdash & 795.51 & 0.33 & \bfseries 0.11 & 134.9 & 72.7 & 720.0 & 720.0 & 720.0 & 624.5 & 720.0 \\
1,000 & 50 & 0.11 & 0.15 & 0.01 & \bfseries 0.00 & \textemdash & \textemdash & \textemdash & 832.32 & 0.75 & \bfseries 0.11 & 139.1 & 69.2 & 720.0 & 720.0 & 720.0 & 669.8 & 720.0 \\
1,000 & 75 & 0.12 & 0.15 & 0.02 & \bfseries 0.00 & \textemdash & \textemdash & \textemdash & 1,063.30 & 0.96 & \bfseries 0.11 & 138.6 & 78.7 & 720.0 & 720.0 & 720.0 & 722.5 & 721.0 \\
1,000 & 100 & 0.16 & 0.28 & 0.09 & \bfseries 0.00 & \textemdash & \textemdash & \textemdash & 746.14 & 1.62 & \bfseries 0.11 & 137.4 & 98.7 & 720.0 & 720.0 & 720.0 & 722.8 & 724.0 \\
2,000 & 5 & 0.19 & 0.06 & 0.02 & \bfseries 0.00 & \textemdash & \textemdash & \textemdash & 290.65 & 0.24 & \bfseries 0.44 & 601.7 & 216.0 & 720.0 & 720.0 & 720.0 & 647.2 & 720.0 \\
2,000 & 10 & 0.20 & 0.03 & 0.01 & \bfseries 0.00 & \textemdash & \textemdash & \textemdash & 874.80 & 0.50 & \bfseries 0.47 & 610.1 & 238.2 & 720.0 & 720.0 & 720.0 & 724.6 & 720.0 \\
2,000 & 25 & 0.25 & 0.04 & \bfseries 0.00 & \bfseries 0.00 & \textemdash & \textemdash & \textemdash & 1,125.57 & 1.40 & \bfseries 0.53 & 612.6 & 280.8 & 720.0 & 720.0 & 720.0 & 728.9 & 723.0 \\
2,000 & 50 & 0.34 & 0.07 & \bfseries 0.00 & \bfseries 0.00 & \textemdash & \textemdash & \textemdash & 1,224.72 & 2.88 & \bfseries 0.47 & 606.6 & 295.6 & 720.0 & 720.0 & 720.0 & 725.6 & 732.0 \\
2,000 & 75 & 0.42 & 0.06 & 0.01 & \bfseries 0.00 & \textemdash & \textemdash & \textemdash & 951.39 & 3.96 & \bfseries 0.46 & 612.7 & 279.2 & 720.0 & 720.0 & 720.0 & 728.5 & 720.0 \\
2,000 & 100 & 0.55 & 0.15 & 0.02 & \bfseries 0.00 & \textemdash & \textemdash & \textemdash & 1,067.38 & 5.09 & \bfseries 0.47 & 611.4 & 280.4 & 720.0 & 720.0 & 720.0 & 730.8 & 720.0 \\
 \midrule Avg &  &  & 0.21 & 0.08 & \bfseries 0.00 & \textemdash & \textemdash & \textemdash & 425.81 & 0.88 & \bfseries 0.16 & 193.0 & 100.7 & 720.0 & 435.6 & 685.6 & 484.2 & 650.8 \\
Min &  &  & 0.03 & \bfseries 0.00 & \bfseries 0.00 & \textemdash & 32.56 & \textemdash & \bfseries 0.00 & \bfseries 0.00 & \bfseries 0.02 & 5.7 & 21.8 & 720.0 & 40.6 & 368.1 & 5.0 & 84.0 \\
Max &  &  & 0.64 & 0.45 & \bfseries 0.03 & \textemdash & \textemdash & \textemdash & 1,224.72 & 5.09 & \bfseries 0.53 & 612.7 & 295.6 & 720.0 & 720.0 & 720.0 & 730.8 & 732.0 \\
\bottomrule \\ [-1.5ex] \multicolumn{8}{l}{\textbf{QKBP} is the contribution in this paper}
\end{tabularx}

	\caption{Results for the instances of the \texttt{Dispersion-QKP} collection with distance computation strategy \texttt{geo}: Each row represents a graph with a certain number of nodes $n$ and a density $\Delta$. For each graph, there are six instances with different budget values. These budget values are chosen as fractions of the total node weights of the graph. The fractions used are 0.025, 0.05, 0.1, 0.25, 0.5, and 0.75. The abbreviation OFV stands for objective function value and the column $t^{\textup{cut}}$ reports the running time of the simple parametric cut procedure in seconds. Note that the simple parametric cut procedure is applied only once for all six instances of the same graph. The time limit for each instance is 120 seconds. The hyphen (--) indicates that the respective approach did not find a solution for at least one of the six instances within the time limit.}\label{tbl:dispersion_qkp_geo}
\end{table*}

The main conclusion that can be drawn from the four tables is that our breakpoints approach is, by orders of magnitude, the fastest algorithm, and  consistently produces solutions that are very close to the best solutions found by any of the other approaches. The following additional conclusions can be drawn. For larger instances, the approaches differ in terms of scalability. The dynamic programming-based approaches LDP, DP and QK often fail to find solutions within the time limit. The Gurobi-based approach starts to produce low-quality solutions for instances with $n\geq 1,000$ and $\Delta\geq 25$. The only exact approach that finds reasonably good solutions for all instances within the time limit of 120 seconds is the Hexaly-based approach. However, for the largest instances with $\Delta=100$, the average deviations from the best objective values increase to over 5\%. The IHEA approach has the lowest average deviation from the best objective values closely followed by RG and our proposed breakpoints approach. The good performance of the greedy approach is surprising for the instances of the groups \texttt{geo} and \texttt{wgeo}, since they do not belong to the family of ``easy" instances defined by \cite{schauer2016asymptotic}.

\begin{table*}
	\scriptsize
	\setlength{\tabcolsep}{5.2pt}
\begin{tabularx}{\textwidth}{Xrrr|rrrrrrrr|rrrrr}
 \multicolumn{17}{c}{\texttt{TeamFormation-QKP-1} collection} \\ \toprule
\multicolumn{4}{r}{} & \multicolumn{8}{c}{Avg deviation from best OFV (\%)} & \multicolumn{5}{c}{Sum runtime over six budgets (s)} \\ \cmidrule(rl){5-12} \cmidrule(rl){13-17}\
 & $n$ & $\Delta$ & t$^{\textup{cut}}$ & \textbf{QKBP} & RG & IHEA & LDP & DP & QK & Gurobi & Hexaly & \textbf{QKBP} & RG & IHEA & Gurobi & Hexaly \\
\midrule
IMDB & 1,021 & 2.15 & 0.83 & 0.36 & 19.13 & 1.89 & \textemdash & \textemdash & \textemdash & \bfseries 0.00 & \bfseries 0.00 & \bfseries 0.04 & 91.5 & 61.1 & 14.0 & 104.0 \\
DBLP & 7,159 & 0.06 & 0.62 & 0.01 & 52.71 & 49.12 & \textemdash & \textemdash & \textemdash & \bfseries 0.00 & \bfseries 0.00 & \bfseries 3.40 & 723.1 & 926.0 & 8.3 & 86.0 \\
StackOverflow & 8,834 & 0.16 & 0.09 & 0.01 & 0.87 & 0.77 & \textemdash & \textemdash & \textemdash & \bfseries 0.00 & 0.31 & \bfseries 5.81 & 724.8 & 1,040.7 & 103.0 & 587.0 \\
Bibsonomy & 9,269 & 0.07 & 0.80 & \bfseries 0.00 & 81.07 & 80.02 & \textemdash & \textemdash & \textemdash & \bfseries 0.00 & 1.24 & \bfseries 6.32 & 723.7 & 1,085.1 & 21.2 & 394.0 \\
Synthetic\_01 & 7,000 & 0.15 & 0.66 & 0.02 & 45.72 & 41.39 & \textemdash & \textemdash & \textemdash & \bfseries 0.00 & 0.36 & \bfseries 3.15 & 722.9 & 917.8 & 35.9 & 311.0 \\
Synthetic\_02 & 7,000 & 0.15 & 0.61 & 0.02 & 59.72 & 47.02 & \textemdash & \textemdash & \textemdash & \bfseries 0.00 & 0.07 & \bfseries 3.41 & 722.1 & 929.0 & 43.6 & 418.0 \\
Synthetic\_03 & 7,000 & 0.15 & 0.62 & 0.01 & 57.81 & 44.30 & \textemdash & \textemdash & \textemdash & \bfseries 0.00 & 0.03 & \bfseries 3.39 & 723.7 & 921.6 & 44.9 & 349.0 \\
Synthetic\_04 & 7,000 & 0.14 & 0.62 & \bfseries 0.00 & 46.06 & 26.94 & \textemdash & \textemdash & \textemdash & \bfseries 0.00 & 0.69 & \bfseries 3.19 & 727.0 & 930.8 & 44.6 & 221.0 \\
Synthetic\_05 & 7,000 & 0.15 & 0.62 & 0.01 & 55.35 & 38.15 & \textemdash & \textemdash & \textemdash & \bfseries 0.00 & 0.60 & \bfseries 3.22 & 723.0 & 922.7 & 32.6 & 554.0 \\
Synthetic\_06 & 7,000 & 0.14 & 0.61 & 0.02 & 49.58 & 33.62 & \textemdash & \textemdash & \textemdash & \bfseries 0.00 & 0.87 & \bfseries 3.45 & 722.6 & 923.6 & 42.4 & 325.0 \\
Synthetic\_07 & 7,000 & 0.15 & 0.61 & 0.01 & 64.22 & 56.24 & \textemdash & \textemdash & \textemdash & \bfseries 0.00 & 0.76 & \bfseries 3.40 & 723.3 & 928.8 & 32.5 & 491.0 \\
Synthetic\_08 & 7,000 & 0.16 & 0.61 & 0.01 & 42.61 & 40.92 & \textemdash & \textemdash & \textemdash & \bfseries 0.00 & 3.88 & \bfseries 3.33 & 722.6 & 922.4 & 34.0 & 438.0 \\
Synthetic\_09 & 7,000 & 0.14 & 0.62 & 0.02 & 54.75 & 40.07 & \textemdash & \textemdash & \textemdash & \bfseries 0.00 & 1.32 & \bfseries 3.74 & 721.7 & 921.1 & 36.6 & 402.0 \\
Synthetic\_10 & 7,000 & 0.15 & 0.64 & 0.01 & 51.40 & 35.13 & \textemdash & \textemdash & \textemdash & \bfseries 0.00 & 0.09 & \bfseries 3.13 & 724.6 & 924.0 & 40.2 & 422.0 \\
 \midrule Avg &  &  &  & 0.04 & 48.64 & 38.26 & \textemdash & \textemdash & \textemdash & \bfseries 0.00 & 0.73 & \bfseries 3.50 & 678.3 & 882.5 & 38.1 & 364.4 \\
Min &  &  &  & \bfseries 0.00 & 0.87 & 0.77 & \textemdash & \textemdash & \textemdash & \bfseries 0.00 & \bfseries 0.00 & \bfseries 0.04 & 91.5 & 61.1 & 8.3 & 86.0 \\
Max &  &  &  & 0.36 & 81.07 & 80.02 & \textemdash & \textemdash & \textemdash & \bfseries 0.00 & 3.88 & \bfseries 6.32 & 727.0 & 1,085.1 & 103.0 & 587.0 \\
\bottomrule \\ [-1.5ex] \multicolumn{8}{l}{\textbf{QKBP} is the contribution in this paper}
\end{tabularx}

	\caption{Results for the instances of the \texttt{TeamFormation-QKP-1} collection: Each row represents a graph with a certain number of nodes $n$ and a density $\Delta$. For each graph, there are six instances with different budget values. These budget values are chosen as fractions of the total node weights of the graph. The fractions used are 0.025, 0.05, 0.1, 0.25, 0.5, and 0.75. The abbreviation OFV stands for objective function value and the column $t^{\textup{cut}}$ reports the running time of the simple parametric cut procedure in seconds. Note that the simple parametric cut procedure is applied only once for all six instances of the same graph. The time limit for each instance is 120 seconds. The hyphen (--) indicates that the respective approach did not find a solution for at least one of the six instances within the time limit.}\label{tbl:team_formation_qkp1}
\end{table*}

Next, we present the results for the collection \textbf{TeamFormation-QKP-1}, which contains 84 instances with up to $n=9,269$ nodes. The aggregated results for these instances are presented in Table~\ref{tbl:team_formation_qkp1}. Each row of the table represents a graph. For each graph, there are six instances with different budget values. These budget values are chosen as fractions of the total node weights of the graph. The fractions used are 0.025, 0.05, 0.1, 0.25, 0.5, 0.75. The first four rows (IMDB, DBLP, StackOverflow, and Bibsonomy) represent real-world graphs and the remaining rows represent synthetically generated graphs. Even though these instances contain many nodes, the Gurobi-based approach finds optimal solutions for almost all instances. This is because the size of the optimization problem solved by Gurobi, in terms of the number of decision variables and constraints, depends primarily on the number of edges, and since the density of the graphs $\Delta$ is extremely low, often below 0.2\%, the resulting models are rather small. The availability of optimal solutions allows us to conclude that the QKBP approach is capable of finding optimal and near-optimal solutions for these instances. An interesting observation is that the approaches IHEA and RG deliver low-quality solutions with up to 80\% deviation from the best objective value. We explain this poor performance by the fact that the underlying graphs consist of many disconnected subgraphs. The RG approach starts in one such subgraph and iteratively adds its nodes. When all nodes of the subgraph have been selected and the budget has not yet been reached, the greedy algorithm makes an arbitrary decision to add the next node because all remaining nodes are equally attractive to include, i.e., none of these nodes would increase the total utility because they are not connected to any of the selected nodes and all singleton utilities are zero in these team-formation instances. The IHEA approach has the same limitation because it constructs an initial solution using such a greedy search strategy. The QKBP approach does not suffer from this limitation for these instances, because it finds many breakpoints (see Figure~\ref{fig:breakpoints_DBLP}), and thus uses the greedy subroutine only to add or remove a few nodes. These results demonstrate that greedy search strategies are not able to achieve high-quality solutions for some types of instances.

\begin{figure*}
	\includegraphics[width=\textwidth]{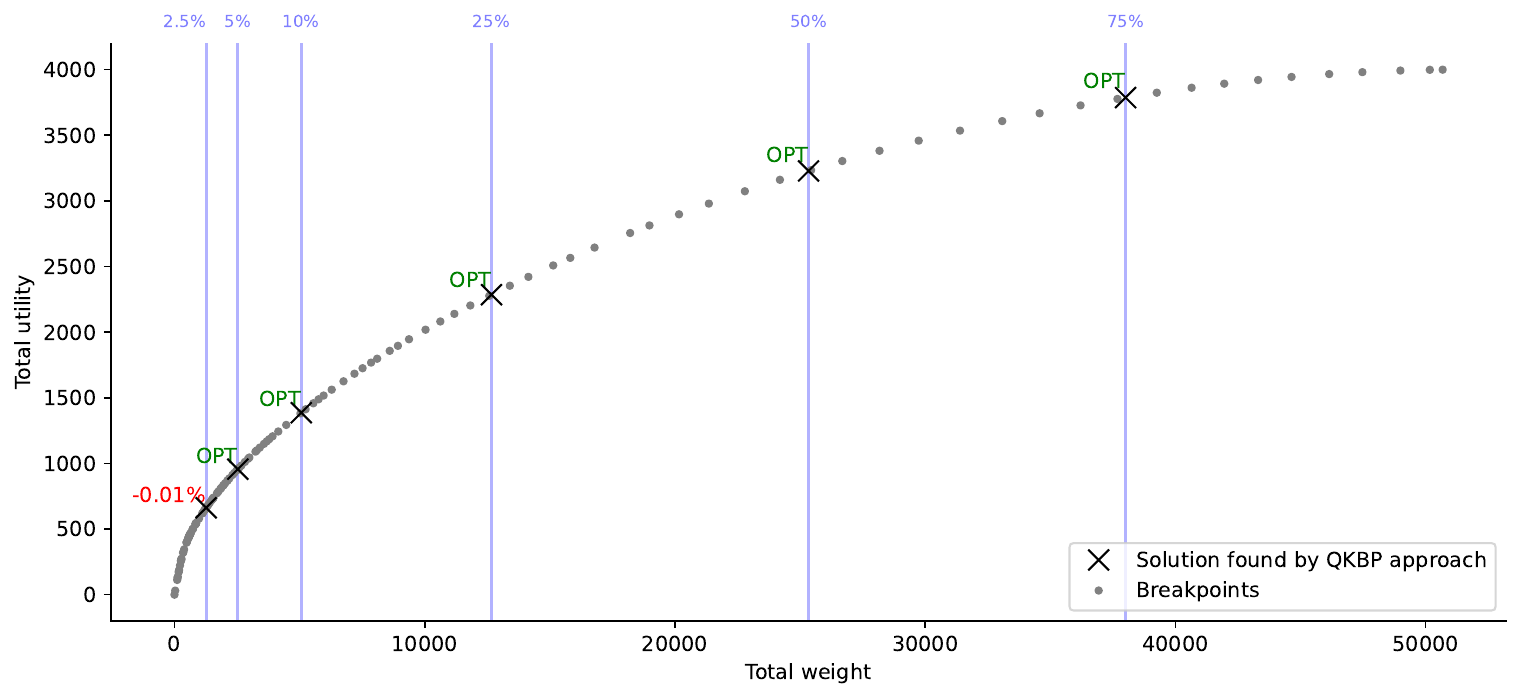}
	\caption{Bibsonomy data set: breakpoints and solutions for budget fractions 2.5\%, 5\%, 10\%, \ldots, 75\%. The solution provided by the  QKBP method for budget fraction 2.5\% deviates from the optimal solution by 0.01\%, all other solutions are optimal.}\label{fig:breakpoints_DBLP}
\end{figure*}

\begin{table*}
	\scriptsize
	\setlength{\tabcolsep}{4.8pt}
\begin{tabularx}{\textwidth}{rrr|rrrrrrrr|rrrrrrrr}
 \multicolumn{19}{c}{\texttt{TeamFormation-QKP-2} collection} \\ \toprule
\multicolumn{3}{r}{} & \multicolumn{8}{c}{Avg deviation from best OFV (\%)} & \multicolumn{8}{c}{Sum runtime over six budgets (s)} \\ \cmidrule(rl){4-11} \cmidrule(rl){12-19}\
$n$ & $\Delta$ & t$^{\textup{cut}}$ & \textbf{QKBP} & RG & IHEA & LDP & DP & QK & Gurobi & Hexaly & \textbf{QKBP} & RG & IHEA & LDP & DP & QK & Gurobi & Hexaly \\
\midrule
1,000 & 13.43 & 0.09 & 0.02 & 17.54 & 4.94 & \textemdash & \textemdash & \textemdash & \bfseries 0.00 & 0.76 & \bfseries 0.05 & 104.2 & 50.0 & 720.0 & 720.0 & 720.0 & 121.5 & 720.0 \\
2,000 & 12.38 & 0.24 & \bfseries 0.08 & 12.48 & 6.72 & \textemdash & \textemdash & \textemdash & 52.57 & 4.24 & \bfseries 0.26 & 453.5 & 179.6 & 720.0 & 720.0 & 720.0 & 718.3 & 720.0 \\
4,000 & 12.68 & 0.56 & \bfseries 0.00 & 14.76 & 10.09 & \textemdash & \textemdash & \textemdash & 573.14 & 4.15 & \bfseries 1.38 & 721.5 & 717.8 & 720.0 & 720.0 & 720.0 & 724.3 & 732.0 \\
6,000 & 12.54 & 1.08 & \bfseries 0.00 & 8.56 & 5.15 & \textemdash & \textemdash & \textemdash & \textemdash & 7.06 & \bfseries 3.22 & 722.9 & 915.4 & 720.0 & 720.0 & 720.0 & 729.9 & 720.0 \\
8,000 & 12.57 & 1.80 & \bfseries 0.00 & 10.73 & 7.60 & \textemdash & \textemdash & \textemdash & \textemdash & 9.86 & \bfseries 6.37 & 723.9 & 1,073.0 & 720.0 & 720.0 & 720.0 & 724.0 & 720.0 \\
10,000 & 12.77 & 2.86 & \bfseries 0.00 & 10.10 & 7.19 & \textemdash & \textemdash & \textemdash & \textemdash & 18.23 & \bfseries 11.27 & 733.2 & 1,255.0 & 720.0 & 720.0 & 720.0 & 726.4 & 720.0 \\
 \midrule Avg &  &  & \bfseries 0.02 & 12.36 & 6.95 & \textemdash & \textemdash & \textemdash & \textemdash & 7.38 & \bfseries 3.76 & 576.5 & 698.5 & 720.0 & 720.0 & 720.0 & 624.1 & 722.0 \\
Min &  &  & \bfseries 0.00 & 8.56 & 4.94 & \textemdash & \textemdash & \textemdash & \bfseries 0.00 & 0.76 & \bfseries 0.05 & 104.2 & 50.0 & 720.0 & 720.0 & 720.0 & 121.5 & 720.0 \\
Max &  &  & \bfseries 0.08 & 17.54 & 10.09 & \textemdash & \textemdash & \textemdash & \textemdash & 18.23 & \bfseries 11.27 & 733.2 & 1,255.0 & 720.0 & 720.0 & 720.0 & 729.9 & 732.0 \\
\bottomrule \\ [-1.5ex] \multicolumn{8}{l}{\textbf{QKBP} is the contribution in this paper}
\end{tabularx}

	\caption{Results for the instances of the \texttt{TeamFormation-QKP-2} collection: Each row represents a graph with a certain number of nodes $n$ and a density $\Delta$. For each graph, there are six instances with different budget values. These budget values are chosen as fractions of the total node weights of the graph. The fractions used are 0.025, 0.05, 0.1, 0.25, 0.5, and 0.75. The abbreviation OFV stands for objective function value and the column $t^{\textup{cut}}$ reports the running time of the simple parametric cut procedure in seconds. Note that the simple parametric cut procedure is applied only once for all six instances of the same graph. The time limit for each instance is 120 seconds. The hyphen (--) indicates that the respective approach did not find a solution for at least one of the six instances within the time limit.}\label{tbl:team_formation_qkp2}
\end{table*}

\begin{figure*}
	\includegraphics[width=\textwidth]{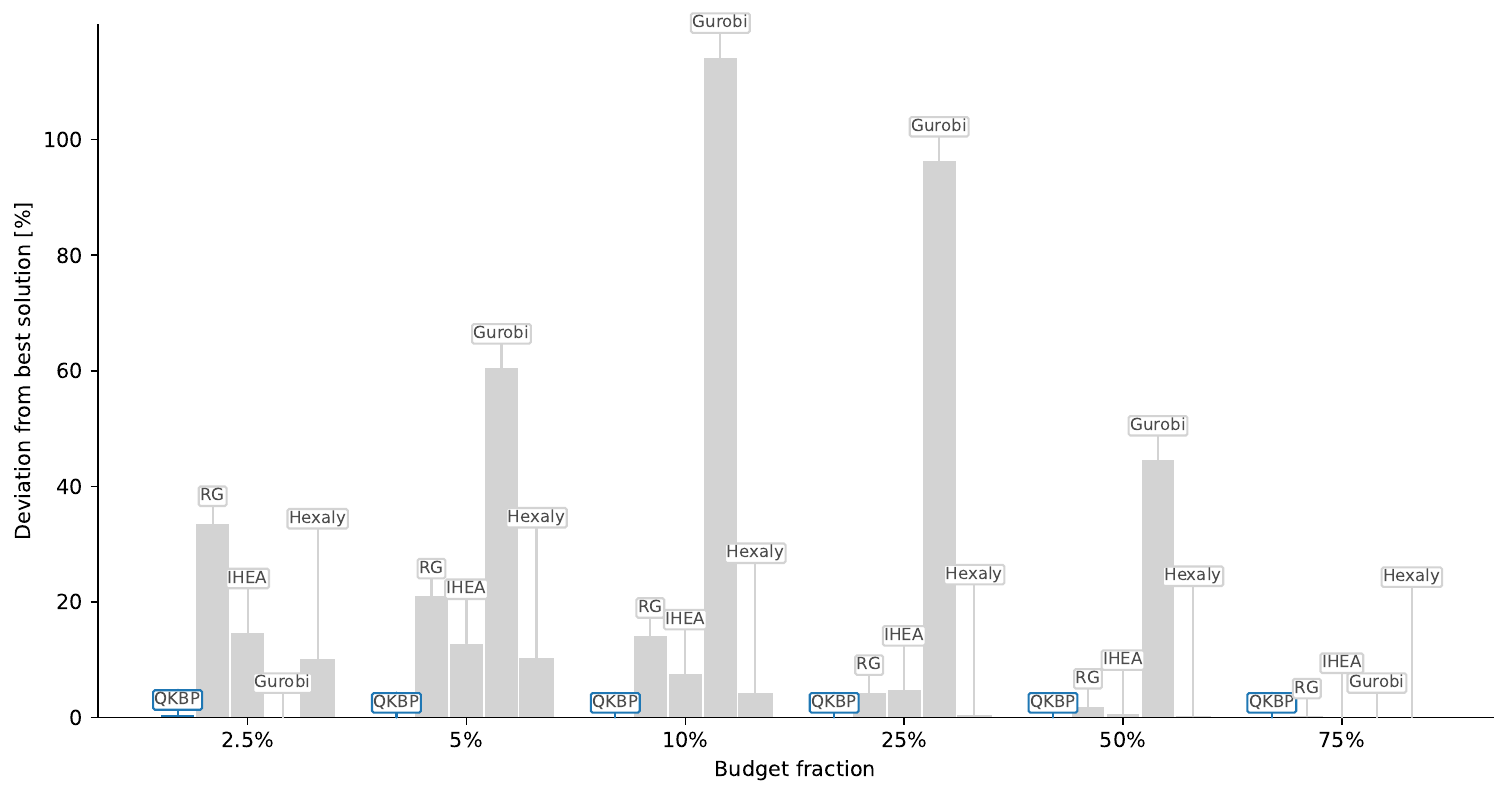}
	\caption{Graph with $n=2,000$ nodes of the collection TeamFormation-QKP-2: the bars represent the deviations from the best objective function value in percent for budget fractions 2.5\%, 5\%, 10\%, 25\%, 50\%, and 75\% obtained with a time limit of 120 seconds. For the QKBP approach, only the first bar is visible because for all other budget fractions it obtained the best solution and hence has deviations of 0\%. We did not include the dynamic programming-based approaches (LDP, DP, QK) as they were not able to find solutions within the time limit.}
	\label{fig:deviations_tf_2000}
\end{figure*}

\begin{table*}
	\scriptsize
	\setlength{\tabcolsep}{6.6pt}
\begin{tabularx}{\textwidth}{rrr|rrrrrrrr|rrrrr}
 \multicolumn{16}{c}{\texttt{TeamFormation-QKP-2} collection} \\ \toprule
\multicolumn{3}{r}{} & \multicolumn{8}{c}{Avg deviation from best OFV (\%)} & \multicolumn{5}{c}{Sum running time (s)} \\ \cmidrule(rl){4-11} \cmidrule(rl){12-16}\
$n$ & $\Delta$ & t$^{\textup{cut}}$ & \textbf{QKBP} & RG & IHEA & LDP & DP & QK & Gurobi & Hexaly & \textbf{QKBP} & RG & IHEA & Gurobi & Hexaly \\
\midrule
1,000 & 13.43 & 0.09 & 0.02 & 17.54 & 4.94 & \textemdash & 3.57 & \textemdash & \bfseries 0.00 & 0.02 & \bfseries 0.04 & 108.7 & 48.4 & 107.6 & 9,508.0 \\
2,000 & 12.38 & 0.22 & 0.20 & 12.60 & 6.84 & \textemdash & \textemdash & \textemdash & \bfseries 0.00 & 1.92 & \bfseries 0.25 & 735.8 & 172.7 & 1,970.1 & 21,600.0 \\
4,000 & 12.68 & 0.56 & \bfseries 0.01 & 14.74 & 9.86 & \textemdash & \textemdash & \textemdash & 40.96 & 0.96 & \bfseries 1.17 & 6,328.7 & 958.5 & 16,564.8 & 21,612.0 \\
6,000 & 12.54 & 1.08 & \bfseries 0.03 & 8.58 & 5.04 & \textemdash & \textemdash & \textemdash & 268.83 & 2.20 & \bfseries 2.49 & 13,807.1 & 2,401.7 & 21,609.8 & 21,600.0 \\
8,000 & 12.57 & 1.76 & \bfseries 0.00 & 10.61 & 7.58 & \textemdash & \textemdash & \textemdash & 254.19 & 0.95 & \bfseries 4.88 & 16,983.6 & 4,306.1 & 21,616.7 & 21,600.0 \\
10,000 & 12.77 & 2.89 & \bfseries 0.00 & 9.36 & 6.60 & \textemdash & \textemdash & \textemdash & 319.08 & 2.54 & \bfseries 7.75 & 19,583.2 & 7,024.6 & 21,624.3 & 21,600.0 \\
 \midrule Avg &  &  & \bfseries 0.04 & 12.24 & 6.81 & \textemdash & \textemdash & \textemdash & 147.18 & 1.43 & \bfseries 2.76 & 9,591.2 & 2,485.3 & 13,915.5 & 19,586.7 \\
Min &  &  & \bfseries 0.00 & 8.58 & 4.94 & \textemdash & 3.57 & \textemdash & \bfseries 0.00 & 0.02 & \bfseries 0.04 & 108.7 & 48.4 & 107.6 & 9,508.0 \\
Max &  &  & \bfseries 0.20 & 17.54 & 9.86 & \textemdash & \textemdash & \textemdash & 319.08 & 2.54 & \bfseries 7.75 & 19,583.2 & 7,024.6 & 21,624.3 & 21,612.0 \\
\bottomrule \\ [-1.5ex] \multicolumn{8}{l}{\textbf{QKBP} is the contribution in this paper}
\end{tabularx}

	\caption{Results obtained with an extended time limit of 3,600 seconds for the instances of the \texttt{TeamFormation-QKP-2} collection: Each row represents a graph with a certain number of nodes $n$ and a density $\Delta$. For each graph, there are six instances with different budget values. These budget values are chosen as fractions of the total node weights of the graph. The fractions used are 0.025, 0.05, 0.1, 0.25, 0.5, and 0.75. The abbreviation OFV stands for objective function value and the column $t^{\textup{cut}}$ reports the running time of the simple parametric cut procedure in seconds. Note that the simple parametric cut procedure is applied only once for all six instances of the same graph. The hyphen (--) indicates that the respective approach did not find a solution for at least one of the six instances within the time limit.}\label{tbl:team_formation_qkp2_3600}
\end{table*}

The collection \textbf{TeamFormation-QKP-2} contains 36 instances with considerably higher densities compared to the instances from the collection TeamFormation-QKP-1. Table~\ref{tbl:team_formation_qkp2} presents the aggregated results for these instances. The proposed breakpoints approach considerably outperforms all other approaches in terms of objective value and running time. Due to the higher densities of the graphs, the Gurobi-based approach performs poorly on instances with $n\geq 2,000$. The Hexaly-based approach scales better than the Gurobi-based approach but its solutions can still deviate substantially from the best solutions in terms of objective function values. The greedy approach performs worse than the Hexaly-based approach on average. To analyze the impact of the time limit on the performance, we applied all approaches with a time limit of 3,600 seconds to the instances of the collection \textbf{TeamFormation-QKP-2}. 

\begin{table*}
	\small
	\begin{center}\setlength{\tabcolsep}{3pt}
\begin{tabular}{rr|rrrrrrrr|rrrrr}
 & \multicolumn{1}{c}{} & \multicolumn{13}{c}{\texttt{\texttt{TeamFormation-QKP-2} instance with $n=10{,}000$}} \\ \toprule
\multicolumn{2}{r}{} & \multicolumn{8}{c}{Deviation from best OFV (\%)} & \multicolumn{5}{c}{Running time (s)} \\ \cmidrule(rl){3-10} \cmidrule(rl){11-15}
 \multicolumn{1}{c}{$\gamma$} & Best OFV & \textbf{QKBP} & RG & IHEA & LDP & DP & QK & Gurobi & Hexaly & \textbf{QKBP} & RG & IHEA & Gurobi & Hexaly \\
\midrule
2.5 & 1,827.9 & \bfseries 0.00 & 33.38 & 29.30 & \textemdash & \textemdash & \textemdash & 1,058.22 & 10.15 & \bfseries 0.62 & 1,581.0 & 716.9 & 3,603.5 & 3,600.0 \\
5.0 & 3,518.4 & \bfseries 0.00 & 16.48 & 9.23 & \textemdash & \textemdash & \textemdash & 435.64 & 3.96 & \bfseries 0.45 & 3,600.1 & 877.6 & 3,603.5 & 3,600.0 \\
10.0 & 7,107.6 & \bfseries 0.00 & 5.22 & 1.01 & \textemdash & \textemdash & \textemdash & 245.85 & 0.46 & \bfseries 1.15 & 3,600.6 & 1,082.8 & 3,603.5 & 3,600.0 \\
25.0 & 18,590.7 & \bfseries 0.00 & 0.89 & 0.04 & \textemdash & \textemdash & \textemdash & 113.32 & 0.26 & \bfseries 1.80 & 3,600.5 & 1,448.8 & 3,606.7 & 3,600.0 \\
50.0 & 37,621.3 & \bfseries 0.00 & 0.19 & 0.04 & \textemdash & \textemdash & \textemdash & 43.85 & 0.30 & \bfseries 1.62 & 3,600.9 & 1,559.1 & 3,603.5 & 3,600.0 \\
75.0 & 54,050.1 & \bfseries 0.00 & 0.01 & 0.01 & \textemdash & \textemdash & \textemdash & 17.58 & 0.11 & \bfseries 2.10 & 3,600.1 & 1,339.4 & 3,603.6 & 3,600.0 \\
 \midrule Avg &  & \bfseries 0.00 & 9.36 & 6.60 & \textemdash & \textemdash & \textemdash & 319.08 & 2.54 & \bfseries 1.29 & 3,263.9 & 1,170.8 & 3,604.0 & 3,600.0 \\
Min &  & \bfseries 0.00 & 0.01 & 0.01 & \textemdash & \textemdash & \textemdash & 17.58 & 0.11 & \bfseries 0.45 & 1,581.0 & 716.9 & 3,603.5 & 3,600.0 \\
Max &  & \bfseries 0.00 & 33.38 & 29.30 & \textemdash & \textemdash & \textemdash & 1,058.22 & 10.15 & \bfseries 2.10 & 3,600.9 & 1,559.1 & 3,606.7 & 3,600.0 \\
\bottomrule \\ [-1.5ex] \multicolumn{8}{l}{\textbf{QKBP} is the contribution in this paper}
\end{tabular}
\end{center}
	\caption{Results obtained with an extended time limit of 3,600 seconds for the graph with $n=10{,}000$ of the \texttt{TeamFormation-QKP-2} collection: Each row reports the results for one budget value that is chosen as a fraction $\gamma$ of the total node weights of the graph. The abbreviation OFV stands for objective function value. The hyphen (--) indicates that the respective approach did not find a solution within the time limit.}\label{tbl:team_formation_qkp2_3600_instance_10000}
\end{table*}

As we can see from Table~\ref{tbl:team_formation_qkp2_3600}, the results of the approaches improve with the increased time limit, but the QKBP approach still provides the best solutions and is the fastest algorithm by a large margin. For the largest graph with $n=10{,}000$, Table~\ref{tbl:team_formation_qkp2_3600_instance_10000} lists the results for all six budget fractions $\gamma$. This table shows that the deviations from the best objective values vary a lot for the RG, IHEA, Gurobi-based, and Hexaly-based approaches and can be substantially higher than the average deviations reported in Table~\ref{tbl:team_formation_qkp2_3600}. For example, for $\gamma=2.5\%$ the deviations for the RG, IHEA, Gurobi-based, and Hexaly-based approaches are 33.38\%, 29.30\%, 1,058.22\%, and 10.15\%, respectively. Note that the largest deviations are not always obtained for the smallest budget fractions $\gamma=2.5\%$. Figure~\ref{fig:deviations_tf_2000} demonstrates this for the graph with $n=2{,}000$ from the collection TeamFormation-QKP-2.

Table~\ref{tbl:read_write_times} reports the running times associated with the simple parametric cut procedure in seconds when applied to the six graphs of the \textbf{TeamFormation-QKP-2} collection. The columns $t^{\textup{write}}$, $t^{\textup{cut}}$, $t^{\textup{read}}$ report the running times for writing the input file, executing the parametric cut procedure, and reading the output file, respectively.

\begin{table*}
	\small
	\begin{center}\setlength{\tabcolsep}{10pt}
\begin{tabular}{rrrrrr}
\toprule
$n$ & $\Delta$ & Number of edges & t$^{\textup{write}}$ & t$^{\textup{cut}}$ & t$^{\textup{read}}$ \\
\midrule
1,000 & 13.43 & 67,159 & 0.12 & 0.09 & 0.01 \\
2,000 & 12.38 & 247,696 & 0.45 & 0.22 & 0.01 \\
4,000 & 12.68 & 1,014,045 & 1.64 & 0.56 & 0.02 \\
6,000 & 12.54 & 2,257,990 & 3.86 & 1.08 & 0.03 \\
8,000 & 12.57 & 4,023,218 & 7.19 & 1.76 & 0.03 \\
10,000 & 12.77 & 6,383,021 & 11.18 & 2.89 & 0.04 \\
\bottomrule
\end{tabular}
\end{center}
	\caption{Running times associated with simple parametric cut procedure in seconds when applied to the six graphs of the \texttt{TeamFormation-QKP-2} collection. The columns $t^{\textup{write}}$, $t^{\textup{cut}}$, $t^{\textup{read}}$ report the running times for writing the input file, executing the parametric cut procedure, and reading the output file, respectively.}\label{tbl:read_write_times}
\end{table*}

\subsection{Discussion of results}\label{sec:results_discussion}

Table~\ref{tbl:all_collections} provides a summary of the results for all dataset collections. For each collection and each approach, the table reports the average deviation from the best objective function values and the sum of the running times for all the respective instances. 

\begin{table*}
	\scriptsize
	\setlength{\tabcolsep}{4pt}
\begin{tabular}{l|rrrrrrrr|rrrrrrrr}
\toprule
 \multicolumn{1}{l}{} & \multicolumn{8}{c}{Average deviation from best OFV (\%)} & \multicolumn{8}{c}{Sum of running time (s)} \\ \cmidrule(rl){2-9} \cmidrule(rl){10-17}\
Collection & \textbf{QKBP} & RG & IHEA & LDP & DP & QK & Gurobi & Hexaly & \textbf{QKBP} & RG & IHEA & LDP & DP & QK & Gurobi & Hexaly \\
\midrule
Standard-QKP & 0.43 & 0.19 & \bfseries 0.00 & \textemdash & 0.04 & \textemdash & 0.02 & 0.14 & \bfseries 0.5 & 56.7 & 234.3 & 7,621.8 & 354.5 & 1,193.9 & 3,749.5 & 7,673.0 \\
QKP-GroupII & 0.12 & 0.05 & \bfseries 0.00 & \textemdash & \textemdash & \textemdash & 526.94 & 4.46 & \bfseries 13.9 & 5,621.5 & 2,834.6 & 9,600.0 & 9,600.0 & 9,600.0 & 9,658.0 & 9,645.0 \\
QKP-GroupIII & 0.03 & 0.03 & \bfseries 0.00 & \textemdash & \textemdash & \textemdash & \textemdash & 19.30 & \bfseries 100.4 & 4,821.3 & 6,462.0 & 4,800.0 & 4,800.0 & 4,800.0 & 4,867.8 & 4,805.0 \\
Large-QKP & 0.14 & 0.05 & \bfseries 0.00 & \textemdash & \textemdash & \textemdash & \textemdash & 2.04 & \bfseries 32.3 & 7,696.0 & 6,873.0 & 17,280.0 & 13,271.8 & 17,280.0 & 14,813.2 & 16,758.0 \\
Dispersion-QKP & 0.23 & 0.11 & \bfseries 0.00 & \textemdash & \textemdash & \textemdash & 388.17 & 1.24 & \bfseries 15.0 & 18,491.3 & 9,905.3 & 69,120.0 & 41,911.0 & 66,759.6 & 47,586.3 & 62,442.0 \\
TF-QKP-1 & 0.04 & 48.64 & 38.26 & \textemdash & \textemdash & \textemdash & \bfseries 0.00 & 0.73 & \bfseries 49.0 & 9,496.5 & 12,354.6 & 10,080.0 & 10,080.0 & 10,080.0 & 533.8 & 5,102.0 \\
TF-QKP-2 & \bfseries 0.02 & 12.36 & 6.95 & \textemdash & \textemdash & \textemdash & \textemdash & 7.38 & \bfseries 22.5 & 3,459.2 & 4,190.8 & 4,320.0 & 4,320.0 & 4,320.0 & 3,744.4 & 4,332.0 \\
 \midrule Avg & \bfseries 0.14 & 8.78 & 6.46 & \textemdash & \textemdash & \textemdash & \textemdash & 5.04 & \bfseries 33.4 & 7,091.8 & 6,122.1 & 17,546.0 & 12,048.2 & 16,290.5 & 12,136.1 & 15,822.4 \\
Min & 0.02 & 0.03 & \bfseries 0.00 & \textemdash & 0.04 & \textemdash & \bfseries 0.00 & 0.14 & \bfseries 0.5 & 56.7 & 234.3 & 4,320.0 & 354.5 & 1,193.9 & 533.8 & 4,332.0 \\
Max & \bfseries 0.43 & 48.64 & 38.26 & \textemdash & \textemdash & \textemdash & \textemdash & 19.30 & \bfseries 100.4 & 18,491.3 & 12,354.6 & 69,120.0 & 41,911.0 & 66,759.6 & 47,586.3 & 62,442.0 \\
\bottomrule \multicolumn{8}{l}{} \\ [-1.75ex] \multicolumn{8}{l}{\textbf{QKBP} is the contribution in this paper}
\end{tabular}

	\caption{Summary of results for all collections: For each collection and algorithm, the table reports the average deviation from the best objective function values and the sum of running times of all the respective instances. The hyphen (--) indicates that the respective algorithm did not find a solution for at least one of the respective instances, within the time limit of 120 seconds.}\label{tbl:all_collections}
\end{table*}

We assess the results as follows: The QKBP approach is the fastest and most robust approach, consistently delivering high-quality solutions across all collections. In contrast, all other approaches struggle with some of the collections. IHEA typically generates the best solutions, especially for large instances, but along with RG, it is not competitive on very sparse instances like those in the team formation collections. The good performance of RG on most instances, however, is notable. Among the exact approaches (QK, Gurobi, Hexaly), only Hexaly scales well to large instances, though it is outperformed by the best heuristics in terms of solution quality and speed. The tested dynamic programming-based approaches (LDP, DP, QK) are not competitive on medium and large instances. Overall, QKBP excels across all instance types, densities, and budgets, achieving the best or near-best solutions in a fraction of the time required by other algorithms.

\section{Conclusions}\label{sec:conclusions}
We introduce here the use of the breakpoints algorithm (QKBP) for the Quadratic Knapsack Problem. The algorithm uses the breakpoints in the concave envelope of solutions to the parametric relaxation of the problem, and determines the interval between two consecutive breakpoints that contain the budget.  The algorithm makes use of the fact that each breakpoint corresponds to an optimal solution and the solutions for consecutive breakpoints are nested, and applies a simple greedy procedure to append or remove items from the breakpoint solution in order to attain a feasible solution for the given budget.
The breakpoints are attained efficiently by a parametric cut procedure, on a compact formulation of the problem, generated by a general purpose procedure. 

The paper provides an extensive experimental study that includes many of the known datasets as well as new datasets and large scale instances not previously studied.   We compare the performance of QKBP to that of leading state-of-the-art algorithms including DP, LDP, QK, RG and IHEA as well as integer programming solvers Gurobi and Hexaly (see Table \ref{table_tested_algorithms} for description and codes).
While the performance of all these algorithms vary, depending on the size of the problem, or density, or budget fraction, QKBP is consistently delivering optimal or close to optimal solutions, with running times that are orders of magnitude faster that those of the other algorithms.

In future research, we plan to compare our approach with newly developed heuristics for the QKP, such as the one recently presented by \cite{fennich2024novel}.

\noindent
\section*{Acknowledgement}
The first author was supported in part by AI institute NSF award 2112533. We would like to express our gratitude to Dr.~Yuning Chen and Prof.~Dr.~Jin-Kao Hao for providing the IHEA algorithm's source code and for their assistance with its installation. We are also grateful to Prof.~Dr.~Djeumou Fomeni for providing the C source code of the LDP approach.  

\bibliography{references.bib}

\appendix

\section{Additional results for instances of the Dispersion-QKP collection}

\begin{table*}
	\scriptsize
	\setlength{\tabcolsep}{4.7pt}
\begin{tabularx}{\textwidth}{rrr|rrrrrrrr|rrrrrrrr}
 \multicolumn{19}{c}{\texttt{Dispersion-QKP (wgeo)} collection} \\ \toprule
\multicolumn{3}{r}{} & \multicolumn{8}{c}{Avg deviation from best OFV (\%)} & \multicolumn{8}{c}{Sum runtime over six budgets (s)} \\ \cmidrule(rl){4-11} \cmidrule(rl){12-19}\
$n$ & $\Delta$ & t$^{\textup{cut}}$ & \textbf{QKBP} & RG & IHEA & LDP & DP & QK & Gurobi & Hexaly & \textbf{QKBP} & RG & IHEA & LDP & DP & QK & Gurobi & Hexaly \\
\midrule
300 & 5 & 0.02 & 0.28 & 0.54 & 0.07 & \textemdash & 414.56 & \textemdash & \bfseries 0.00 & \bfseries 0.00 & \bfseries 0.02 & 5.7 & 21.7 & 720.0 & 45.9 & 720.0 & 4.0 & 82.0 \\
300 & 10 & 0.02 & 0.17 & 0.10 & \bfseries 0.00 & \textemdash & 1,104.19 & \textemdash & \bfseries 0.00 & \bfseries 0.00 & \bfseries 0.02 & 5.8 & 21.6 & 720.0 & 39.5 & 720.0 & 9.8 & 386.0 \\
300 & 25 & 0.02 & 0.25 & 0.07 & 0.01 & \textemdash & 911.35 & \textemdash & \bfseries 0.00 & 0.01 & \bfseries 0.02 & 6.1 & 26.9 & 720.0 & 39.6 & 720.0 & 54.8 & 518.0 \\
300 & 50 & 0.02 & 0.44 & 0.03 & \bfseries 0.00 & \textemdash & 679.77 & \textemdash & \bfseries 0.00 & 0.02 & \bfseries 0.02 & 6.0 & 25.3 & 720.0 & 39.0 & 720.0 & 288.2 & 720.0 \\
300 & 75 & 0.02 & 0.32 & 0.11 & \bfseries 0.00 & \textemdash & 431.43 & \textemdash & \bfseries 0.00 & 0.01 & \bfseries 0.02 & 6.1 & 24.1 & 720.0 & 39.7 & 720.0 & 407.4 & 720.0 \\
300 & 100 & 0.03 & 0.46 & 0.09 & \bfseries 0.00 & \textemdash & 532.23 & \textemdash & \bfseries 0.00 & 0.01 & \bfseries 0.02 & 6.2 & 26.7 & 720.0 & 36.3 & 720.0 & 260.7 & 720.0 \\
500 & 5 & 0.03 & 0.46 & 0.32 & 0.03 & \textemdash & 296.41 & \textemdash & \bfseries 0.00 & 0.01 & \bfseries 0.04 & 19.2 & 32.5 & 720.0 & 294.2 & 720.0 & 13.7 & 394.0 \\
500 & 10 & 0.03 & 0.13 & 0.15 & \bfseries 0.00 & \textemdash & 945.80 & \textemdash & \bfseries 0.00 & 0.03 & \bfseries 0.03 & 19.5 & 31.0 & 720.0 & 223.0 & 720.0 & 110.5 & 632.0 \\
500 & 25 & 0.03 & 0.29 & 0.11 & \bfseries 0.00 & \textemdash & 599.04 & \textemdash & \bfseries 0.00 & 0.13 & \bfseries 0.04 & 19.9 & 43.4 & 720.0 & 228.9 & 720.0 & 218.7 & 720.0 \\
500 & 50 & 0.03 & 0.24 & 0.09 & \bfseries 0.00 & \textemdash & 453.14 & \textemdash & 1.33 & 0.07 & \bfseries 0.04 & 20.3 & 36.2 & 720.0 & 223.5 & 720.0 & 655.3 & 720.0 \\
500 & 75 & 0.03 & 0.22 & 0.02 & \bfseries 0.00 & \textemdash & 541.27 & \textemdash & 678.14 & 0.09 & \bfseries 0.04 & 20.9 & 32.8 & 720.0 & 229.3 & 720.0 & 720.5 & 720.0 \\
500 & 100 & 0.03 & 0.13 & 0.02 & \bfseries 0.00 & \textemdash & 689.25 & \textemdash & 758.77 & 0.07 & \bfseries 0.03 & 19.7 & 40.4 & 720.0 & 219.8 & 720.0 & 720.5 & 720.0 \\
1,000 & 5 & 0.09 & 0.13 & 0.06 & 0.01 & \textemdash & \textemdash & \textemdash & \bfseries 0.00 & 0.17 & \bfseries 0.09 & 132.1 & 69.6 & 720.0 & 720.0 & 720.0 & 186.8 & 720.0 \\
1,000 & 10 & 0.11 & 0.05 & 0.04 & \bfseries 0.00 & \textemdash & \textemdash & \textemdash & \bfseries 0.00 & 0.10 & \bfseries 0.10 & 135.3 & 68.8 & 720.0 & 720.0 & 720.0 & 368.7 & 720.0 \\
1,000 & 25 & 0.09 & 0.08 & 0.01 & \bfseries 0.00 & \textemdash & \textemdash & \textemdash & 754.81 & 0.13 & \bfseries 0.10 & 132.1 & 58.3 & 720.0 & 720.0 & 720.0 & 636.7 & 720.0 \\
1,000 & 50 & 0.11 & 0.13 & \bfseries 0.00 & \bfseries 0.00 & \textemdash & \textemdash & \textemdash & 1,007.67 & 0.30 & \bfseries 0.10 & 141.6 & 74.0 & 720.0 & 720.0 & 720.0 & 665.9 & 720.0 \\
1,000 & 75 & 0.14 & 0.09 & 0.03 & \bfseries 0.00 & \textemdash & \textemdash & \textemdash & 863.49 & 0.55 & \bfseries 0.10 & 134.8 & 87.9 & 720.0 & 720.0 & 720.0 & 722.3 & 720.0 \\
1,000 & 100 & 0.12 & 0.15 & \bfseries 0.00 & \bfseries 0.00 & \textemdash & \textemdash & \textemdash & 990.27 & 0.70 & \bfseries 0.10 & 133.1 & 84.2 & 720.0 & 720.0 & 720.0 & 722.8 & 723.0 \\
2,000 & 5 & 0.17 & 0.03 & 0.03 & \bfseries 0.00 & \textemdash & \textemdash & \textemdash & 566.22 & 0.18 & \bfseries 0.35 & 591.8 & 249.4 & 720.0 & 720.0 & 720.0 & 528.4 & 720.0 \\
2,000 & 10 & 0.20 & 0.03 & \bfseries 0.00 & \bfseries 0.00 & \textemdash & \textemdash & \textemdash & 783.17 & 0.30 & \bfseries 0.42 & 599.5 & 268.1 & 720.0 & 720.0 & 720.0 & 706.4 & 720.0 \\
2,000 & 25 & 0.25 & 0.06 & 0.01 & \bfseries 0.00 & \textemdash & \textemdash & \textemdash & 8.49 & 0.84 & \bfseries 0.43 & 603.0 & 217.2 & 720.0 & 720.0 & 720.0 & 722.4 & 723.0 \\
2,000 & 50 & 0.33 & 0.04 & 0.01 & \bfseries 0.00 & \textemdash & \textemdash & \textemdash & 1,035.01 & 1.71 & \bfseries 0.44 & 605.6 & 254.3 & 720.0 & 720.0 & 720.0 & 733.4 & 732.0 \\
2,000 & 75 & 0.42 & 0.07 & \bfseries 0.00 & \bfseries 0.00 & \textemdash & \textemdash & \textemdash & 1,034.69 & 2.57 & \bfseries 0.43 & 604.6 & 264.7 & 720.0 & 720.0 & 720.0 & 733.1 & 720.0 \\
2,000 & 100 & 0.56 & 0.05 & \bfseries 0.00 & \bfseries 0.00 & \textemdash & \textemdash & \textemdash & 1,024.97 & 2.87 & \bfseries 0.42 & 605.5 & 246.3 & 720.0 & 720.0 & 720.0 & 730.8 & 720.0 \\
 \midrule Avg &  &  & 0.18 & 0.08 & \bfseries 0.01 & \textemdash & \textemdash & \textemdash & 396.13 & 0.45 & \bfseries 0.14 & 190.6 & 96.0 & 720.0 & 429.1 & 720.0 & 455.1 & 654.6 \\
Min &  &  & 0.03 & \bfseries 0.00 & \bfseries 0.00 & \textemdash & 296.41 & \textemdash & \bfseries 0.00 & \bfseries 0.00 & \bfseries 0.02 & 5.7 & 21.6 & 720.0 & 36.3 & 720.0 & 4.0 & 82.0 \\
Max &  &  & 0.46 & 0.54 & \bfseries 0.07 & \textemdash & \textemdash & \textemdash & 1,035.01 & 2.87 & \bfseries 0.44 & 605.6 & 268.1 & 720.0 & 720.0 & 720.0 & 733.4 & 732.0 \\
\bottomrule \\ [-1.5ex] \multicolumn{8}{l}{\textbf{QKBP} is the contribution in this paper}
\end{tabularx}

	\caption{Results for the instances of the \texttt{Dispersion-QKP} collection with distance computation strategy \texttt{wgeo}: Each row represents a graph with a certain number of nodes $n$ and a density $\Delta$. For each graph, there are six instances with different budget values. These budget values are chosen as fractions of the total node weights of the graph. The fractions used are 0.025, 0.05, 0.1, 0.25, 0.5, 0.75. The abbreviation OFV stands for objective function value and the column $t^{\textup{cut}}$ reports the running time of the simple parametric cut procedure in seconds. Note that the simple parametric cut procedure is applied only once for all six instances of the same graph. The time limit for each instance is 120 seconds. The hyphen (--) indicates that the respective approach did not find a solution for at least one of the six instances within the time limit.}\label{tbl:dispersion_qkp_wgeo}
\end{table*}

\begin{table*}
	\scriptsize
	\setlength{\tabcolsep}{4.8pt}
\begin{tabularx}{\textwidth}{rrr|rrrrrrrr|rrrrrrrr}
 \multicolumn{19}{c}{\texttt{Dispersion-QKP (expo)} collection} \\ \toprule
\multicolumn{3}{r}{} & \multicolumn{8}{c}{Avg deviation from best OFV (\%)} & \multicolumn{8}{c}{Sum runtime over six budgets (s)} \\ \cmidrule(rl){4-11} \cmidrule(rl){12-19}\
$n$ & $\Delta$ & t$^{\textup{cut}}$ & \textbf{QKBP} & RG & IHEA & LDP & DP & QK & Gurobi & Hexaly & \textbf{QKBP} & RG & IHEA & LDP & DP & QK & Gurobi & Hexaly \\
\midrule
300 & 5 & 0.02 & 0.80 & 0.86 & 0.10 & \textemdash & 46.87 & \textemdash & \bfseries 0.00 & \bfseries 0.00 & \bfseries 0.02 & 5.8 & 21.0 & 720.0 & 51.0 & 720.0 & 4.5 & 103.0 \\
300 & 10 & 0.03 & 0.38 & 0.30 & \bfseries 0.00 & \textemdash & 136.67 & \textemdash & \bfseries 0.00 & \bfseries 0.00 & \bfseries 0.02 & 6.1 & 23.8 & 720.0 & 46.2 & 365.6 & 8.8 & 354.0 \\
300 & 25 & 0.02 & 0.24 & 0.06 & \bfseries 0.00 & \textemdash & 161.46 & \textemdash & \bfseries 0.00 & \bfseries 0.00 & \bfseries 0.02 & 6.4 & 22.1 & 720.0 & 43.3 & 720.0 & 43.3 & 506.0 \\
300 & 50 & 0.03 & 0.55 & 0.19 & \bfseries 0.00 & \textemdash & 230.02 & \textemdash & \bfseries 0.00 & 0.43 & \bfseries 0.02 & 6.1 & 24.3 & 720.0 & 42.0 & 720.0 & 284.4 & 720.0 \\
300 & 75 & 0.02 & 0.90 & 0.40 & \bfseries 0.00 & \textemdash & 248.46 & \textemdash & \bfseries 0.00 & 0.66 & \bfseries 0.02 & 6.3 & 28.3 & 720.0 & 42.3 & 720.0 & 522.6 & 720.0 \\
300 & 100 & 0.02 & 0.65 & 0.48 & \bfseries 0.00 & \textemdash & 270.55 & \textemdash & 0.04 & 1.09 & \bfseries 0.02 & 6.2 & 26.3 & 720.0 & 41.6 & 720.0 & 619.7 & 720.0 \\
500 & 5 & 0.03 & 0.32 & 0.48 & 0.02 & \textemdash & 65.98 & \textemdash & \bfseries 0.00 & \bfseries 0.00 & \bfseries 0.03 & 19.4 & 35.0 & 720.0 & 335.5 & 720.0 & 11.9 & 450.0 \\
500 & 10 & 0.03 & 0.28 & 0.08 & \bfseries 0.00 & \textemdash & 139.58 & \textemdash & \bfseries 0.00 & 0.03 & \bfseries 0.04 & 19.9 & 36.5 & 720.0 & 281.6 & 720.0 & 177.4 & 609.0 \\
500 & 25 & 0.03 & 0.12 & 0.05 & \bfseries 0.00 & \textemdash & 245.57 & \textemdash & 0.01 & 0.37 & \bfseries 0.04 & 20.6 & 37.6 & 720.0 & 256.5 & 720.0 & 672.2 & 720.0 \\
500 & 50 & 0.03 & 0.34 & 0.13 & \bfseries 0.00 & \textemdash & 280.36 & \textemdash & 1.32 & 0.97 & \bfseries 0.04 & 21.2 & 37.3 & 720.0 & 252.7 & 720.0 & 720.6 & 720.0 \\
500 & 75 & 0.03 & 0.30 & 0.09 & \bfseries 0.00 & \textemdash & 330.39 & \textemdash & 556.88 & 1.35 & \bfseries 0.04 & 21.1 & 40.7 & 720.0 & 244.1 & 720.0 & 720.5 & 720.0 \\
500 & 100 & 0.03 & 0.48 & 0.16 & \bfseries 0.00 & \textemdash & 351.33 & \textemdash & 816.88 & 1.98 & \bfseries 0.04 & 21.1 & 37.4 & 720.0 & 247.8 & 720.0 & 720.5 & 720.0 \\
1,000 & 5 & 0.09 & 0.15 & 0.11 & 0.01 & \textemdash & \textemdash & \textemdash & \bfseries 0.00 & 0.16 & \bfseries 0.09 & 132.5 & 64.2 & 720.0 & 720.0 & 720.0 & 97.2 & 720.0 \\
1,000 & 10 & 0.08 & 0.15 & 0.07 & \bfseries 0.00 & \textemdash & \textemdash & \textemdash & 0.41 & 0.33 & \bfseries 0.10 & 134.4 & 61.5 & 720.0 & 720.0 & 720.0 & 721.9 & 720.0 \\
1,000 & 25 & 0.08 & 0.15 & 0.02 & \bfseries 0.00 & \textemdash & \textemdash & \textemdash & 0.60 & 0.80 & \bfseries 0.11 & 136.9 & 77.0 & 720.0 & 720.0 & 720.0 & 647.0 & 720.0 \\
1,000 & 50 & 0.12 & 0.11 & 0.03 & \bfseries 0.00 & \textemdash & \textemdash & \textemdash & 625.71 & 2.18 & \bfseries 0.11 & 137.2 & 89.8 & 720.0 & 720.0 & 720.0 & 722.1 & 720.0 \\
1,000 & 75 & 0.12 & 0.05 & 0.02 & \bfseries 0.00 & \textemdash & \textemdash & \textemdash & 782.95 & 3.15 & \bfseries 0.11 & 137.5 & 91.4 & 720.0 & 720.0 & 720.0 & 722.3 & 720.0 \\
1,000 & 100 & 0.16 & 0.12 & 0.02 & \bfseries 0.00 & \textemdash & \textemdash & \textemdash & 798.94 & 3.74 & \bfseries 0.11 & 137.6 & 97.2 & 720.0 & 720.0 & 720.0 & 728.9 & 723.0 \\
2,000 & 5 & 0.17 & 0.04 & 0.02 & \bfseries 0.00 & \textemdash & \textemdash & \textemdash & 2.82 & 0.40 & \bfseries 0.38 & 600.4 & 250.8 & 720.0 & 720.0 & 720.0 & 649.2 & 720.0 \\
2,000 & 10 & 0.22 & 0.06 & 0.01 & \bfseries 0.00 & \textemdash & \textemdash & \textemdash & 899.22 & 0.90 & \bfseries 0.41 & 605.3 & 246.8 & 720.0 & 720.0 & 720.0 & 724.9 & 720.0 \\
2,000 & 25 & 0.25 & 0.07 & 0.02 & \bfseries 0.00 & \textemdash & \textemdash & \textemdash & 889.19 & 2.94 & \bfseries 0.49 & 611.4 & 301.3 & 720.0 & 720.0 & 720.0 & 740.9 & 721.0 \\
2,000 & 50 & 0.34 & 0.11 & 0.03 & \bfseries 0.00 & \textemdash & \textemdash & \textemdash & 1,035.36 & 4.85 & \bfseries 0.57 & 612.2 & 312.9 & 720.0 & 720.0 & 720.0 & 731.8 & 733.0 \\
2,000 & 75 & 0.47 & 0.07 & 0.04 & \bfseries 0.00 & \textemdash & \textemdash & \textemdash & 914.84 & 6.52 & \bfseries 0.49 & 612.8 & 305.3 & 720.0 & 720.0 & 720.0 & 732.8 & 720.0 \\
2,000 & 100 & 0.58 & 0.10 & 0.07 & \bfseries 0.00 & \textemdash & \textemdash & \textemdash & 1,005.41 & 7.51 & \bfseries 0.49 & 614.4 & 300.0 & 720.0 & 720.0 & 720.0 & 732.9 & 720.0 \\
 \midrule Avg &  &  & 0.27 & 0.16 & \bfseries 0.01 & \textemdash & \textemdash & \textemdash & 347.11 & 1.68 & \bfseries 0.16 & 193.0 & 107.0 & 720.0 & 438.5 & 705.2 & 519.1 & 655.0 \\
Min &  &  & 0.04 & 0.01 & \bfseries 0.00 & \textemdash & 46.87 & \textemdash & \bfseries 0.00 & \bfseries 0.00 & \bfseries 0.02 & 5.8 & 21.0 & 720.0 & 41.6 & 365.6 & 4.5 & 103.0 \\
Max &  &  & 0.90 & 0.86 & \bfseries 0.10 & \textemdash & \textemdash & \textemdash & 1,035.36 & 7.51 & \bfseries 0.57 & 614.4 & 312.9 & 720.0 & 720.0 & 720.0 & 740.9 & 733.0 \\
\bottomrule \\ [-1.5ex] \multicolumn{8}{l}{\textbf{QKBP} is the contribution in this paper}
\end{tabularx}

	\caption{Results for the instances of the \texttt{Dispersion-QKP} collection with distance computation strategy \texttt{expo}: Each row represents a graph with a certain number of nodes $n$ and a density $\Delta$. For each graph, there are six instances with different budget values. These budget values are chosen as fractions of the total node weights of the graph. The fractions used are 0.025, 0.05, 0.1, 0.25, 0.5, 0.75. The abbreviation OFV stands for objective function value and the column $t^{\textup{cut}}$ reports the running time of the simple parametric cut procedure in seconds. Note that the simple parametric cut procedure is applied only once for all six instances of the same graph. The time limit for each instance is 120 seconds. The hyphen (--) indicates that the respective approach did not find a solution for at least one of the six instances within the time limit.}\label{tbl:dispersion_qkp_expo}
\end{table*}

\begin{table*}
	\scriptsize
	\setlength{\tabcolsep}{4.8pt}
\begin{tabularx}{\textwidth}{rrr|rrrrrrrr|rrrrrrrr}
 \multicolumn{19}{c}{\texttt{Dispersion-QKP (ran)} collection} \\ \toprule
\multicolumn{3}{r}{} & \multicolumn{8}{c}{Avg deviation from best OFV (\%)} & \multicolumn{8}{c}{Sum runtime over six budgets (s)} \\ \cmidrule(rl){4-11} \cmidrule(rl){12-19}\
$n$ & $\Delta$ & t$^{\textup{cut}}$ & \textbf{QKBP} & RG & IHEA & LDP & DP & QK & Gurobi & Hexaly & \textbf{QKBP} & RG & IHEA & LDP & DP & QK & Gurobi & Hexaly \\
\midrule
300 & 5 & 0.02 & 0.19 & 0.68 & \bfseries 0.00 & \textemdash & 40.96 & 5.45 & \bfseries 0.00 & \bfseries 0.00 & \bfseries 0.02 & 5.8 & 23.9 & 720.0 & 53.4 & 8.8 & 2.4 & 62.0 \\
300 & 10 & 0.02 & 0.57 & 0.06 & \bfseries 0.00 & \textemdash & 69.61 & \textemdash & \bfseries 0.00 & \bfseries 0.00 & \bfseries 0.02 & 5.9 & 24.3 & 720.0 & 48.2 & 720.0 & 12.6 & 265.0 \\
300 & 25 & 0.02 & 0.30 & 0.05 & \bfseries 0.00 & \textemdash & 200.44 & \textemdash & \bfseries 0.00 & 0.07 & \bfseries 0.02 & 6.3 & 21.1 & 720.0 & 47.6 & 601.5 & 127.2 & 540.0 \\
300 & 50 & 0.03 & 0.46 & 0.19 & \bfseries 0.00 & \textemdash & 219.04 & \textemdash & \bfseries 0.00 & 0.57 & \bfseries 0.02 & 6.5 & 27.0 & 720.0 & 45.2 & 720.0 & 515.1 & 654.0 \\
300 & 75 & 0.02 & 0.47 & 0.22 & \bfseries 0.00 & \textemdash & 260.14 & \textemdash & \bfseries 0.00 & 1.29 & \bfseries 0.02 & 6.3 & 22.5 & 720.0 & 44.2 & 720.0 & 546.7 & 677.0 \\
300 & 100 & 0.02 & 0.48 & 0.38 & \bfseries 0.00 & \textemdash & 303.33 & \textemdash & 0.07 & 1.03 & \bfseries 0.02 & 6.3 & 32.3 & 720.0 & 46.8 & 369.1 & 497.9 & 687.0 \\
500 & 5 & 0.03 & 0.54 & 0.30 & 0.01 & \textemdash & 63.23 & \textemdash & \bfseries 0.00 & \bfseries 0.00 & \bfseries 0.04 & 19.9 & 38.1 & 720.0 & 326.2 & 720.0 & 30.5 & 317.0 \\
500 & 10 & 0.03 & 0.27 & 0.14 & \bfseries 0.00 & \textemdash & 102.12 & \textemdash & \bfseries 0.00 & 0.03 & \bfseries 0.04 & 20.2 & 30.6 & 720.0 & 304.8 & 720.0 & 71.2 & 654.0 \\
500 & 25 & 0.03 & 0.34 & 0.07 & \bfseries 0.00 & \textemdash & 179.05 & \textemdash & 0.03 & 0.52 & \bfseries 0.04 & 21.0 & 39.7 & 720.0 & 288.5 & 720.0 & 645.3 & 720.0 \\
500 & 50 & 0.03 & 0.24 & 0.11 & \bfseries 0.00 & \textemdash & 285.57 & \textemdash & 0.08 & 1.42 & \bfseries 0.04 & 20.8 & 36.4 & 720.0 & 268.4 & 720.0 & 721.3 & 720.0 \\
500 & 75 & 0.03 & 0.33 & 0.18 & \bfseries 0.00 & \textemdash & 408.38 & \textemdash & 609.95 & 2.24 & \bfseries 0.04 & 21.2 & 36.2 & 720.0 & 255.3 & 720.0 & 720.4 & 720.0 \\
500 & 100 & 0.05 & 0.30 & 0.18 & \bfseries 0.00 & \textemdash & 404.08 & \textemdash & 835.70 & 3.26 & \bfseries 0.04 & 21.6 & 36.5 & 720.0 & 263.5 & 720.0 & 720.4 & 720.0 \\
1,000 & 5 & 0.08 & 0.10 & 0.06 & \bfseries 0.00 & \textemdash & \textemdash & \textemdash & \bfseries 0.00 & 0.14 & \bfseries 0.10 & 134.6 & 71.5 & 720.0 & 720.0 & 720.0 & 224.3 & 720.0 \\
1,000 & 10 & 0.08 & 0.07 & 0.04 & \bfseries 0.00 & \textemdash & \textemdash & \textemdash & \bfseries 0.00 & 0.49 & \bfseries 0.11 & 140.3 & 76.4 & 720.0 & 720.0 & 720.0 & 507.3 & 720.0 \\
1,000 & 25 & 0.08 & 0.10 & 0.04 & \bfseries 0.00 & \textemdash & \textemdash & \textemdash & 1,003.14 & 1.20 & \bfseries 0.11 & 136.3 & 86.7 & 720.0 & 720.0 & 720.0 & 720.5 & 720.0 \\
1,000 & 50 & 0.11 & 0.22 & 0.09 & \bfseries 0.00 & \textemdash & \textemdash & \textemdash & 614.70 & 2.57 & \bfseries 0.11 & 142.2 & 94.9 & 720.0 & 720.0 & 720.0 & 722.0 & 720.0 \\
1,000 & 75 & 0.14 & 0.37 & 0.12 & \bfseries 0.00 & \textemdash & \textemdash & \textemdash & 823.47 & 3.38 & \bfseries 0.11 & 139.8 & 94.0 & 720.0 & 720.0 & 720.0 & 722.3 & 720.0 \\
1,000 & 100 & 0.14 & 0.07 & 0.05 & \bfseries 0.00 & \textemdash & \textemdash & \textemdash & 768.95 & 4.33 & \bfseries 0.11 & 140.5 & 96.6 & 720.0 & 720.0 & 720.0 & 731.9 & 722.0 \\
2,000 & 5 & 0.19 & 0.05 & 0.03 & \bfseries 0.00 & \textemdash & \textemdash & \textemdash & 386.77 & 0.53 & \bfseries 0.40 & 603.5 & 244.2 & 720.0 & 720.0 & 720.0 & 720.6 & 720.0 \\
2,000 & 10 & 0.20 & 0.08 & \bfseries 0.00 & \bfseries 0.00 & \textemdash & \textemdash & \textemdash & 26.91 & 1.08 & \bfseries 0.48 & 604.7 & 266.3 & 720.0 & 720.0 & 720.0 & 709.8 & 720.0 \\
2,000 & 25 & 0.26 & 0.12 & 0.03 & \bfseries 0.00 & \textemdash & \textemdash & \textemdash & 1,086.93 & 2.98 & \bfseries 0.49 & 610.7 & 294.8 & 720.0 & 720.0 & 720.0 & 729.2 & 723.0 \\
2,000 & 50 & 0.36 & 0.08 & 0.04 & \bfseries 0.00 & \textemdash & \textemdash & \textemdash & 1,071.28 & 4.85 & \bfseries 0.49 & 611.8 & 295.3 & 720.0 & 720.0 & 720.0 & 726.5 & 732.0 \\
2,000 & 75 & 0.47 & 0.10 & 0.07 & \bfseries 0.00 & \textemdash & \textemdash & \textemdash & 972.89 & 6.45 & \bfseries 0.49 & 611.5 & 293.6 & 720.0 & 720.0 & 720.0 & 729.0 & 720.0 \\
2,000 & 100 & 0.55 & 0.08 & 0.07 & \bfseries 0.00 & \textemdash & \textemdash & \textemdash & 1,006.27 & 8.46 & \bfseries 0.49 & 615.7 & 332.4 & 720.0 & 720.0 & 720.0 & 730.7 & 720.0 \\
 \midrule Avg &  &  & 0.25 & 0.13 & \bfseries 0.00 & \textemdash & \textemdash & \textemdash & 383.63 & 1.95 & \bfseries 0.16 & 193.9 & 109.0 & 720.0 & 443.0 & 670.8 & 524.4 & 641.4 \\
Min &  &  & 0.05 & \bfseries 0.00 & \bfseries 0.00 & \textemdash & 40.96 & 5.45 & \bfseries 0.00 & \bfseries 0.00 & \bfseries 0.02 & 5.8 & 21.1 & 720.0 & 44.2 & 8.8 & 2.4 & 62.0 \\
Max &  &  & 0.57 & 0.68 & \bfseries 0.01 & \textemdash & \textemdash & \textemdash & 1,086.93 & 8.46 & \bfseries 0.49 & 615.7 & 332.4 & 720.0 & 720.0 & 720.0 & 731.9 & 732.0 \\
\bottomrule \\ [-1.5ex] \multicolumn{8}{l}{\textbf{QKBP} is the contribution in this paper}
\end{tabularx}

	\caption{Results for the instances of the \texttt{Dispersion-QKP} collection with distance computation strategy \texttt{ran}: Each row represents a graph with a certain number of nodes $n$ and a density $\Delta$. For each graph, there are six instances with different budget values. These budget values are chosen as fractions of the total node weights of the graph. The fractions used are 0.025, 0.05, 0.1, 0.25, 0.5, 0.75. The abbreviation OFV stands for objective function value and the column $t^{\textup{cut}}$ reports the running time of the simple parametric cut procedure in seconds. Note that the simple parametric cut procedure is applied only once for all six instances of the same graph. The time limit for each instance is 120 seconds. The hyphen (--) indicates that the respective approach did not find a solution for at least one of the six instances within the time limit.}\label{tbl:dispersion_qkp_ran}
\end{table*}

\end{document}